\newenvironment{proof}{\noindent\textbf{Proof\ }}{\hspace*{\fill}$\Box$\medskip}
\newtheorem{lemma}{Lemma}
\newtheorem{proposition}{Proposition}
\newtheorem{corollary}{Corollary}
\newtheorem{definition}{Definition}
\newtheorem{remark}{Remark}
\newtheorem{theorem}{Theorem}
\begin{document}

\title{Whittaker functions for Steinberg representations of $GL(n)$ over a $p$-adic field} \author{Ehud Moshe Baruch and Markos Karameris}\maketitle

 Let $G=GL_{n}(F)$ and let $(\pi_{St},V)$ be a (generalized) Steinberg representation of $G$. It is well known that the space of Iwahori fixed vectors in $V$ is one dimensional. The Iwahori Hecke algebra acts on this space via a character. We determine the value of this character on a particular Hecke algebra element and use this action to determine in full the Whittaker function associated with an Iwahori fixed vector generalizing a result of Baruch and Purkait for $GL_2(F)$. We show that the Iwahori fixed vector is "new" in the sense that it is not fixed by any larger parahoric. We also show that the restriction of the (generalized) Steinberg representation to $SL_n(F)$ remains irreducible hence we get the Whittaker function attached to a Steinberg representation of $SL_n(F)$.

\section{Introduction and Notations}
In \cite{reeder}, Reeder considers two eigenvectors of the finite part $\mathcal{H}_{\mathbf{W}}$ of the Iwahori Hecke algebra $\mathcal{H}(G,J)$ corresponding to the spherical and the Steinberg representation. The spherical Whittaker function is given by the well known Casselman Shalika formula \cite{shint}. For a split reductive group, Reeder computes the Whittaker function of the non-spherical vector on the diagonal values and points that computing the values for a general element can be difficult. In \cite{Bump}, Bump, Brubaker, Buciumas Henrik and Gustaffson give formulas for Whittaker functions of a basis of elements of Iwahori fixed vectors in an induced representation. This however does not immediately provide a simple formula for the Whittaker function in question. 

Let $F$ be a non-archimedean local field and $|.|$ the standard absolute value on $F$. We denote by $\mathfrak{p}$ the maximal ideal, generated by the uniformizer $\varpi$, $\mathcal{O}$ the ring of integers and $\mathfrak{f}=\mathcal{O}/\mathfrak{p}$ the residue field with $q$ elements. Let $G=GL_n(F)$ and consider the following subgroups inside it: the center $\mathcal{Z}$ of $G$ of scalar matrices $z(t)$ with $t\in F$,
 the maximal compact subgroup $K=GL_n(\mathcal{O})$,
 the standard Borel subgroup $B$ of upper triangular, matrices in $G$,
 the maximal unipotent $N$ of upper triangular unipotent matrices,
 the restriction $N_{\mathcal{O}}=N\cap K$,
 the maximal torus $T$ of diagonal matrices $d(t_1,...,t_n)=diag(t_1,...,t_n)$ with entries $t_i\in F^*$ and also using the notation $\varpi^{\bar{k}}=\varpi^{(k_1,k_2\dots,k_n)}=d(\varpi^{k_1},\dots,\varpi^{k_n})$ we define the set $T^+ = \{\varpi^{\bar{k}}| k_1\geq k_2\geq \dots k_n \}$ and $T^0=T\cap K=T\cap J$,
 the Weyl group of permutation matrices $\mathbf{W}$ generated by the simple reflections $s_1,...,s_{n-1}$ and $w_0$ the longest Weyl element and 
 the Iwahori subgroup $J$ which is the preimage of the Borel in $GL_n(\mathfrak{f})$ under the canonical projection induced by $K\to GL_n(\mathfrak{f})$.

Denote by $\Phi=\{\alpha_{i,j}|i\ne j\}$ the associated root system of type $A_{n-1}$ with positive simple roots $\Delta=\{\alpha_{i,i+1},i=1\dots n-1\}$ where $\alpha_{i,j} : T \to F^{*}$ are given by 
$\alpha_{i,j}(d(t_1,t_2,...,t_n))=t_{i}/t_{j}$. Let $e_{i,j}$ be the $n\times n$ matrix with $1$ in the $(i,j)$ entry and zeroes every where else.
Then $x_{i,j}(t)=I+te_{i,j}$ are the one parameter groups corresponding to the roots of $\Phi$. We also denote these as $x_{\alpha}(t)$ for $\alpha\in\Phi$. Then the elements $d\in T$ act on the roots by conjugation as $dx_{\alpha}(t)d^{-1}=x_{\alpha}(\alpha(d)t)$ and similarly for the Weyl elements: $wx_{\alpha}(t)w^{-1}=x_{w\alpha}(t)$.

\section{The Iwahori-Bruhat decomposition}
Following Roger Howe \cite{rog} set:
$
s_0 =
\begin{pmatrix}	
    0 &  \ldots & \varpi^{-1} \\
		
		\vdots & I_{n-2} & \vdots \\
		\varpi & \dots & 0
	\end{pmatrix} 
$
and
$
u=\begin{pmatrix}
    0 & I_{n-1} \\
   \varpi & 0 
\end{pmatrix}
$

Let $\mathcal{H}(G,J)$ be the Hecke algebra of $J$ bi-invariant and compactly supported functions on $G$. $\mathcal{H}(G,J)$ acts on the space of $J$ invariant vectors in a representation of $G$ and 
in particular on right $J$ invariant functions on $G$. We denote by
$X_g\in \mathcal{H}(G,J)$ the characteristic function of $JgJ$. Then the left action of the Hecke algebra on a $J$ invariant function $F:G\to\mathbbm{C}$ is given by $X_g(F)(x)=\int_{JgJ}F(xh)dh$ (see 1.2 of \cite{bor}). 
In fact we can compute this action explicitly:
\begin{lemma} \label{cosum1}
    For any $g\in G$ with coset decomposition $JgJ=\overset{m}{\underset{i=1}{\bigcup}}\gamma_iJ$ and $f$ a $J$ invariant function. Then $$X_{g}(f)(x)=\sum\limits_{i=1}^mf(x\gamma_i).$$
\end{lemma}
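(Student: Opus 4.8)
The plan is to reduce the defining integral to the coset sum by exploiting the disjointness of the coset decomposition together with the invariance properties at hand. Starting from the definition $X_g(f)(x)=\int_{JgJ}f(xh)\,dh$ and the decomposition $JgJ=\bigcup_{i=1}^m\gamma_i J$ into disjoint left cosets, I would first split the integral as a finite sum
$$X_g(f)(x)=\sum_{i=1}^m\int_{\gamma_i J}f(xh)\,dh,$$
which is legitimate precisely because the domain of integration is partitioned into the pairwise disjoint sets $\gamma_i J$.

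Next, on each piece I would perform the change of variables $h=\gamma_i k$ with $k$ ranging over $J$. Because the Haar measure $dh$ on $G$ is left-invariant, this substitution leaves the measure unchanged, so that $\int_{\gamma_i J}f(xh)\,dh=\int_J f(x\gamma_i k)\,dk$. At this point the right $J$-invariance of $f$ enters: since $k\in J$ we have $f(x\gamma_i k)=f(x\gamma_i)$, so the integrand is constant over $J$. Pulling this constant out yields $\int_J f(x\gamma_i k)\,dk=f(x\gamma_i)\,\mathrm{vol}(J)$.

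Finally I would fix the normalization of the Haar measure so that $\mathrm{vol}(J)=1$ (the standard convention making $X_e$ the identity element of $\mathcal{H}(G,J)$), whereupon each term collapses to $f(x\gamma_i)$ and summation gives the claimed identity $X_g(f)(x)=\sum_{i=1}^m f(x\gamma_i)$.

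The argument is essentially formal, so I do not expect a substantial obstacle; the only points requiring explicit attention are the measure normalization and the well-definedness of the right-hand side, namely that the sum is independent of the choice of coset representatives. The latter is immediate from the right $J$-invariance of $f$, which forces $f(x\gamma_i)=f(x\gamma_i')$ whenever $\gamma_i J=\gamma_i' J$, so the only genuine care needed is to record the convention $\mathrm{vol}(J)=1$ that turns the integral over each coset into a single evaluation of $f$.
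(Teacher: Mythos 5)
Your proof is correct: splitting the integral over the disjoint left cosets $\gamma_i J$, substituting $h=\gamma_i k$ via left invariance of Haar measure, using the right $J$-invariance of $f$, and invoking the normalization $\mathrm{vol}(J)=1$ is exactly the standard computation, and you rightly flag the measure convention and independence of coset representatives as the only points needing care. The paper in fact states this lemma without proof (treating it as immediate from the definition of the action cited from Borel), so your argument simply supplies the omitted routine verification, and it does so correctly.
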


\begin{theorem} [Iwahori, Matsumoto \cite{iwmats} Prop. 3.8] ~\\
$\mathcal{H}(G,J)$ is generated by $X_{s_0}, X_{s_i},i\in\{0,...,n-1\}$ and $ X_u$. The algebraic structure on $\mathcal{H}(G,J)$ is given by the relations:\\
$
1) (X_{s_{i}}-q)(X_{s_{i}}+1)=0\:,\;\;\;i=0,1,...,n-1 \\
2) X_{u}^n=1, \\
3) X_uX_{s_i}=X_{s_{i+1}}X_u \\
4) X_{s_i}X_{s_j}X_{s_i}=X_{s_j}X_{s_i}X_{s_j}, i\equiv \pm1 \mod n \\
5) X_{s_i}X_{s_j}=X_{s_j}X_{s_i}, i\not\equiv \pm1 \mod n 
$

\end{theorem}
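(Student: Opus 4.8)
The plan is to realize $\mathcal{H}(G,J)$ through the Iwahori--Bruhat decomposition and to read off the presentation from the multiplicative structure of the extended affine Weyl group $\widetilde{W}$. First I would recall that $G$ carries an affine Tits system with Iwahori subgroup $J$, so that $G=\bigsqcup_{w\in\widetilde{W}}JwJ$, where $\widetilde{W}=W_{\mathrm{aff}}\rtimes\Omega$ decomposes as the affine Weyl group $W_{\mathrm{aff}}$ of type $\tilde{A}_{n-1}$, generated by the affine simple reflections $s_0,s_1,\dots,s_{n-1}$, extended by the group $\Omega$ of length-zero elements. For $GL_n$ the element $u$ generates $\Omega$ and acts on the affine Dynkin diagram by the cyclic rotation $s_i\mapsto s_{i+1}$ (indices mod $n$). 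By Lemma~\ref{cosum1} the characteristic functions $\{X_w\}_{w\in\widetilde{W}}$ of the double cosets $JwJ$ form a $\mathbb{C}$-basis of $\mathcal{H}(G,J)$, which is the combinatorial backbone of everything that follows.

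The technical heart is the multiplication rule. I would prove that for $w,w'\in\widetilde{W}$ with additive lengths, $\ell(ww')=\ell(w)+\ell(w')$, one has $X_wX_{w'}=X_{ww'}$, and that each simple reflection satisfies the quadratic relation $X_{s_i}^2=qX_e+(q-1)X_{s_i}$. Both statements follow from the BN-pair axioms once one knows the index $[J:J\cap s_iJs_i^{-1}]=q$ attached to every simple reflection (all roots of $A_{n-1}$ have the same length, so the parameter is the single value $q$): when $\ell(s_iw)>\ell(w)$ the product $Js_iJ\cdot JwJ$ collapses to the single double coset $Js_iwJ$ with matching coset counts, whereas when $\ell(s_iw)<\ell(w)$ one instead obtains $qX_{s_iw}+(q-1)X_w$. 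Since $u$ is length-zero, $\ell(uw)=\ell(w)$ and multiplication by $X_u$ is always clean, i.e. $X_uX_w=X_{uw}$ and $X_wX_u=X_{wu}$.

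Granting this, each relation is immediate. Relation (1) is the quadratic relation above, rewritten as $(X_{s_i}-q)(X_{s_i}+1)=0$ using $s_i^2=e$. Relations (4) and (5) are the braid and commutation relations of the Coxeter system of type $\tilde{A}_{n-1}$: both sides of each identity are reduced expressions for the same element of $W_{\mathrm{aff}}$, so by length-additivity both sides equal the same basis element $X_w$. Relation (3) records that $u$ rotates the diagram: from $us_i=s_{i+1}u$ in $\widetilde{W}$, a length-preserving identity, together with the cleanness of multiplication by $X_u$, we get $X_uX_{s_i}=X_{us_i}=X_{s_{i+1}u}=X_{s_{i+1}}X_u$. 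Finally, for relation (2) I would compute directly that $u^n=z(\varpi)=\varpi I_n$ is central; under the normalization identifying the central translation $X_{z(\varpi)}$ with the identity (equivalently, passing to the algebra on which the center acts trivially, which is the relevant setting for a fixed central character) this yields $X_u^n=1$.

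The main obstacle is the multiplication rule of the second paragraph, which is where all the genuine content lives: verifying the affine BN-pair axioms for $(G,J)$ and controlling the coset combinatorics so that lengths add cleanly and the index $q$ surfaces correctly in the quadratic relation. Once the rule $X_wX_{w'}=X_{ww'}$ for additive lengths is in hand, generation is a formality: writing any $w=u^k s_{i_1}\cdots s_{i_r}$ with $s_{i_1}\cdots s_{i_r}$ reduced gives $X_w=X_u^kX_{s_{i_1}}\cdots X_{s_{i_r}}$, so the $X_{s_i}$ and $X_u$ span the algebra. The one point requiring care beyond the standard Coxeter-theoretic argument is the bookkeeping in relation (2): identifying precisely which central element $u^n$ produces and fixing the normalization so that it acts as the identity.
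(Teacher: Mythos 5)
The paper does not actually prove this theorem: it is quoted as a known result from Iwahori--Matsumoto \cite{iwmats}, with the $GL_n$ formulation involving the rotation element $u$ following Howe \cite{rog}, so there is no internal argument to compare against. Your reconstruction is the standard proof from those sources, and its ingredients are the right ones: the affine Bruhat decomposition $G=\bigcup_{w\in\widetilde{W}}JwJ$ over the extended affine Weyl group $\widetilde{W}=W_{\mathrm{aff}}\rtimes\Omega$, the basis $\{X_w\}$, the length-additive product rule $X_wX_{w'}=X_{ww'}$, the quadratic relation $X_{s_i}^2=(q-1)X_{s_i}+qX_1$, and $u$ as the length-zero generator of $\Omega$ rotating the $\tilde{A}_{n-1}$ diagram; your derivations of (1), (3), (4), (5) from these are correct (noting that (4)--(5) as printed presuppose $n\geq 3$, since $\tilde{A}_1$ has no braid relation). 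On relation (2) you are in fact more careful than the printed statement: since $u^n=\varpi I_n=z(\varpi)$, one literally has $X_u^n=X_{z(\varpi)}$ in $\mathcal{H}(G,J)$ as the paper defines it --- a central invertible element, not the identity --- and the relation $X_u^n=1$ holds only under the normalization you describe. This is harmless for the paper's application: with $\tau$ unramified and $\tau^n=1$ the central character of $St^{\chi}$ is trivial, so $X_{z(\varpi)}$ acts as the identity on $(St^{\chi})^J$, consistent with $\rho_{\chi}(X_u)^n=((-1)^{n-1}\epsilon)^n=1$ in Proposition \ref{char}.

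One step of the claimed statement your sketch leaves open: asserting that ``the algebraic structure is given by the relations'' means the list is a \emph{complete} presentation, whereas you only verify that the relations hold and that the generators span (i.e.\ surjectivity of the abstract algebra $\mathcal{A}$ defined by these generators and relations onto $\mathcal{H}(G,J)$). To close this you must also show the map is injective. The standard argument runs inside $\mathcal{A}$: by Matsumoto's theorem the braid and commutation relations make the element $\tilde{X}_w:=X_u^kX_{s_{i_1}}\cdots X_{s_{i_r}}$ attached to $w=u^ks_{i_1}\cdots s_{i_r}$ (with $s_{i_1}\cdots s_{i_r}$ reduced) independent of the chosen reduced word, and the quadratic relation lets you rewrite any monomial in the generators as a linear combination of the $\tilde{X}_w$, so these span $\mathcal{A}$; since the surjection $\mathcal{A}\to\mathcal{H}(G,J)$ carries $\tilde{X}_w$ to the basis element $X_w$ by your length-additivity rule, linear independence of the targets forces the $\tilde{X}_w$ to be a basis of $\mathcal{A}$ and the map to be an isomorphism. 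Without this spanning argument in the abstract algebra, your proposal establishes generation and the validity of the relations, but not that no further relations are needed.
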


We let $\mathcal{H}_{\mathbf{W}}$ be the subalgebra of $\mathcal{H}(G,J)$ associated to the finite Weyl group $\mathbf{W}$. The results on coset decompositions below are partially known from Iwahori-Matsumoto \cite{iwmats}:

\begin{lemma}\label{cosrep}
    For any $w\in\mathbf{W}$ with $\ell(w)=k$: $$JwJ=\underset{(t_{\alpha})\in(\mathcal{O}/\mathfrak{p})^k}{\bigcup}\prod\limits_{\alpha\in\Phi|w^{-1}\alpha\in\Phi^-}x_{\alpha}(t_{\alpha})wJ.$$
\end{lemma}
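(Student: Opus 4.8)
The plan is to prove the three statements that together give the asserted decomposition: (A) every coset on the right lies in $JwJ$; (B) those cosets are pairwise distinct; and (C) $JwJ$ contains exactly $q^{k}$ left cosets, matching the $q^{k}$ cosets produced by $(t_\alpha)\in(\mathcal{O}/\mathfrak{p})^k$. Throughout I write $\mathrm{Inv}(w)=\{\alpha\in\Phi^+ : w^{-1}\alpha\in\Phi^-\}$ for the indexing set appearing in the product; a standard count gives $|\mathrm{Inv}(w)|=\ell(w)=k$, so the three claims do combine into the statement and the parametrization is consistent. I will use the Iwahori factorization $J=\bigl(\prod_{\beta\in\Phi^-}x_\beta(\mathfrak{p})\bigr)\,T^0\,N_{\mathcal{O}}$, the fact that the reduction $J \bmod \mathfrak{p}$ is the Borel $B(\mathfrak{f})$ of $GL_n(\mathfrak{f})$, and the conjugation rule $wx_\alpha(t)w^{-1}=x_{w\alpha}(t)$ recorded in the introduction. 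Claim (A) is then immediate: for $\alpha\in\mathrm{Inv}(w)$ we have $\alpha\in\Phi^+$, so $x_\alpha(t_\alpha)\in N_{\mathcal{O}}\subseteq J$ and hence $\prod_\alpha x_\alpha(t_\alpha)\,w\in JwJ$.

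For (B) I would argue as follows. Suppose $uwJ=u'wJ$ with $u=\prod_\alpha x_\alpha(t_\alpha)$ and $u'=\prod_\alpha x_\alpha(t'_\alpha)$. Since $\mathrm{Inv}(w)$ is a closed set of positive roots, $U_w=\prod_{\alpha\in\mathrm{Inv}(w)}x_\alpha(\mathcal{O})$ is a group in which every element has a unique expression as such an ordered product, so $u'^{-1}u=\prod_\alpha x_\alpha(c_\alpha)\in U_w$. The coset identity gives $w^{-1}(u'^{-1}u)w\in J$, and by the conjugation rule this element equals $\prod_\alpha x_{w^{-1}\alpha}(c_\alpha)$; as $w^{-1}\alpha\in\Phi^-$ for every $\alpha\in\mathrm{Inv}(w)$, it lies in the lower unipotent $N^-$. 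Reducing modulo $\mathfrak{p}$ sends it into $B(\mathfrak{f})$, but a lower unipotent matrix that is upper triangular must be trivial, so $\prod_\alpha x_{w^{-1}\alpha}(\bar c_\alpha)=1$ in $GL_n(\mathfrak{f})$; uniqueness of the product expression forces $\bar c_\alpha=0$, i.e. $t_\alpha\equiv t'_\alpha\pmod{\mathfrak{p}}$. Hence the $q^k$ cosets are distinct.

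For (C) I would compute $[JwJ:J]$ via the augmentation $\epsilon:\mathcal{H}(G,J)\to\mathbb{C}$, $\epsilon(X_g)=\mathrm{vol}(JgJ)/\mathrm{vol}(J)=[JgJ:J]$; since total integration is multiplicative under convolution, $\epsilon$ is an algebra homomorphism. Relation (1) forces $\epsilon(X_{s_i})\in\{q,-1\}$, and positivity of the index gives $\epsilon(X_{s_i})=q$. Writing $w=s_{i_1}\cdots s_{i_k}$ reduced, the standard basis identity $X_w=X_{s_{i_1}}\cdots X_{s_{i_k}}$ then yields $[JwJ:J]=\epsilon(X_w)=q^k$. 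Equivalently one may induct on $\ell(w)$, using the rank one computation $Js_iJ=\bigsqcup_{t\in\mathcal{O}/\mathfrak{p}}x_{\alpha_i}(t)s_iJ$, the inversion-set recursion $\mathrm{Inv}(sw)=\{\alpha\}\sqcup s\,\mathrm{Inv}(w)$ valid when $\ell(sw)=\ell(w)+1$, and the Bruhat product $JsJ\cdot JwJ=JswJ$. Combining (A)--(C), the right-hand side consists of $q^k$ distinct cosets all contained in $JwJ$, and $JwJ$ has exactly $q^k$ cosets, so the two sets coincide.

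I expect the distinctness step (B) to be the main obstacle, since it hinges on the precise Iwahori factorization, on $\mathrm{Inv}(w)$ being a closed root set so that $U_w$ is a group with unique ordered product expressions, and on controlling the non-commutativity of the root subgroups when passing to the reduction modulo $\mathfrak{p}$. Verifying $|\mathrm{Inv}(w)|=k$ and the reduced-word product identity used in (C) are routine, but both must be stated carefully so that the count aligns with the $(\mathcal{O}/\mathfrak{p})^k$ parametrization.
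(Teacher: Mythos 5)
Your proposal is correct, but it takes a genuinely different route from the paper's. The paper argues directly from the Iwahori factorization $J=N_{\mathcal{O}}T^0N^-_{\mathfrak{p}}$: in $JwJ=N_{\mathcal{O}}T^0N^-_{\mathfrak{p}}wJ$ the factors $T^0N^-_{\mathfrak{p}}$, the sub-product of $N_{\mathcal{O}}$ over roots with $w^{-1}\alpha\in\Phi^+$, and the $x_\alpha(s_\alpha)$ with $s_\alpha\in\mathfrak{p}$ are all absorbed into $wJ$ upon conjugation by $w$, leaving one representative per residue class for each inversion root, with $\#\{\alpha\in\Phi^+: w^{-1}\alpha\in\Phi^-\}=\ell(w)$ just matching the indexing; there is no counting of $[JwJ:J]$ and no appeal to the Hecke algebra. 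Your scheme of containment (A), distinctness (B), and cardinality (C) via the index character $\epsilon(X_g)=[JgJ:J]$ is sound, with two caveats worth flagging. First, (C) in its Hecke-algebra form leans on $\epsilon(X_{s_i})=q$ and on $X_w=X_{s_{i_1}}\cdots X_{s_{i_k}}$ for reduced words, facts whose standard proofs are themselves coset computations of exactly this kind (indeed $\epsilon(X_{s_i})=q$ is essentially your rank-one decomposition restated); since the paper itself quotes the Iwahori--Matsumoto presentation and freely uses $X_a*X_b=X_{ab}$ when lengths add (Lemma \ref{permlem}), this is admissible at the paper's level of rigor, but your inductive variant (rank one case, plus $\mathrm{Inv}(sw)=\{\alpha\}\sqcup s\,\mathrm{Inv}(w)$, plus $JsJ\cdot JwJ=JswJ$) is the cleaner, non-circular choice and is closer in spirit to the paper. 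Second, your step (B) is a genuine gain over the paper: the printed proof never verifies that the $q^{\ell(w)}$ cosets are pairwise distinct, yet later arguments use exactly that count (the identity $X_w(f_{w'})(1)=q^{\ell(w)}\delta_{w,w'}$ in the proof of Proposition \ref{fixvec}, and the factor $q^{\ell(w)}$ in Lemma \ref{permlem}); your reduction-mod-$\mathfrak{p}$ argument, resting on unique ordered product expressions in the group $U_w$ attached to the closed root set $\mathrm{Inv}(w)$, supplies precisely this missing disjointness. You also correctly read the index set in the statement as $\{\alpha\in\Phi^+:w^{-1}\alpha\in\Phi^-\}$ rather than the literal $\alpha\in\Phi$ (over all of $\Phi$ that set has constant size $|\Phi^-|$), which is what the paper's own proof uses.
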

\begin{proof}
By the Iwahori factorization for $J$: $J=N_{\mathcal{O}}T^0N_{\mathfrak{p}^-}$ and $N_{\mathfrak{p}}^-=\prod\limits_{\beta\in\Phi^-}x_{\beta}(\varpi s_{\beta})$ where $N_{\mathcal{O}}=N\cap J$ and $s_{\beta}\in\mathcal{O}$. But $w^{-1}T^0N_{\mathfrak{p}^-}w\subset J$. It thus remains to check the terms in $N_{\mathcal{O}}=\prod\limits_{\alpha\in\Phi^+}x_{\alpha}(s_{\alpha})$. But $w^{-1}x_{\alpha}(s_{\alpha})w=x_{w^{-1}\alpha}(s_{\alpha})\in J$ if and only if $w^{-1}\alpha\in\Phi^+$ or $w^{-1}\alpha\in\Phi^-$ and $s_{\alpha}\in\mathfrak{p}$. Finally observing that $\#\{\alpha\in\Phi|w^{-1}\alpha\in\Phi^-\}=\ell(w^{-1})=\ell(w)$ completes the proof.
\end{proof}\\
Using Lemma \ref{cosrep} and the Iwasawa decomposition $G=NTK$, we obtain the refined Bruhat decomposition of $G$ relative to the Iwahori subgroup:
 \begin{lemma}[Bruhat-Iwahori decomposition \cite{iwmats}]\label{decomp} 
 There exists a double coset decomposition of $G$ given by:
    $$G=\underset{d\in T}{\bigcup}\underset{w\in\mathbf{W}}{\bigcup}NdwJ$$
 \end{lemma}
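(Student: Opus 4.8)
The plan is to start from the Iwasawa decomposition $G=NTK$ and reduce the whole problem to understanding the maximal compact $K$ relative to the Iwahori $J$. Since every $g\in G$ can be written as $g=ntk$ with $n\in N$, $t\in T$, $k\in K$, it suffices to decompose $K$ into double cosets of the form $JwJ$ and then absorb the surviving factors into $N$, $T$ and $J$. So the target reduces to two ingredients: a Bruhat-type decomposition $K=\bigcup_{w\in\mathbf{W}}JwJ$, and the explicit coset description of $JwJ$ supplied by Lemma \ref{cosrep}.

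First I would establish $K=\bigcup_{w\in\mathbf{W}}JwJ$. This comes from the classical Bruhat decomposition of the finite group $GL_n(\mathfrak{f})=\bigcup_{w\in\mathbf{W}}B(\mathfrak{f})\,w\,B(\mathfrak{f})$, pulled back through the reduction map $K\to GL_n(\mathfrak{f})$: because $J$ is by definition the preimage of the finite Borel $B(\mathfrak{f})$, and because the permutation matrices representing $\mathbf{W}$ already lie in $K$, taking preimages of the finite cells yields exactly the cells $JwJ$. Substituting this into the Iwasawa decomposition gives $G=\bigcup_{w\in\mathbf{W}}NT\cdot JwJ$.

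Next I would use Lemma \ref{cosrep} to collapse each piece $NT\cdot JwJ$. That lemma writes $JwJ=\bigcup_{(t_\alpha)}\prod_{\alpha}x_\alpha(t_\alpha)\,wJ$ where, as is clear from its proof (the surviving terms come from the factor $N_{\mathcal{O}}=\prod_{\alpha\in\Phi^+}x_\alpha(s_\alpha)$), the product runs over positive roots $\alpha$ with $w^{-1}\alpha\in\Phi^-$; in particular every such $x_\alpha(t_\alpha)$ lies in $N$, so each left coset representative is an element of $N$ times $w$. Writing $u=\prod_\alpha x_\alpha(t_\alpha)\in N$ and using that $T$ normalizes $N$, for $n\in N$ and $t\in T$ we get $ntu=n\,(tut^{-1})\,t\in NT$, whence $NT\cdot u\subseteq NT$. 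Therefore $NT\cdot JwJ\subseteq NTwJ$; the reverse inclusion is immediate since $w\in JwJ$, so $NT\cdot JwJ=NTwJ$. Combining, $G=\bigcup_{w\in\mathbf{W}}NTwJ=\bigcup_{d\in T}\bigcup_{w\in\mathbf{W}}NdwJ$, which is the claim.

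The main obstacle is the collapsing step: it is essential that the coset representatives of $JwJ/J$ produced by Lemma \ref{cosrep} are supported on positive root groups (hence lie in $N$), for this is precisely what allows them to be swallowed by the $NT$ factor after commuting $u$ past the torus. I would also note, as a refinement, that the union is far from efficient as stated: since $T^0=T\cap J\subseteq J$ and $w^{-1}T^0w\subseteq J$, one has $NdwJ=NwJ$ for every $d\in T^0$, so the union over $d\in T$ genuinely ranges only over representatives $\varpi^{\bar{k}}$ of $T/T^0$. I would record this, though it is not needed for the covering statement itself.
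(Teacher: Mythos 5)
Your proof is correct and follows exactly the route the paper indicates for Lemma \ref{decomp}: combining the Iwasawa decomposition $G=NTK$ with the coset representatives from Lemma \ref{cosrep}. You simply make explicit two steps the paper leaves implicit, namely the pullback of the finite Bruhat decomposition to get $K=\bigcup_{w\in\mathbf{W}}JwJ$ and the absorption of the representatives $\prod_\alpha x_\alpha(t_\alpha)\in N$ into the $NT$ factor.
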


 \section{Iwahori spherical vectors of Steinberg Representations}
For a smooth multiplicative character $\tau$ of $F^*$ we define a character $\chi=\tau(det(g))$ for $g\in G$. We let $\delta_B$ be the modular character of $B$ (so $\delta_B(d(t_1,t_2,...,t_n))=|t_1|^{n-1}|t_2|^{n-3}...|t_n|^{1-n}$) and denote by $I(\chi)=Ind_B^G(\delta_B^{1/2}\chi)$ the principal series representation formed by normalized induction from $\chi$.
 \begin{definition}
     The generalized Steinberg representation corresponding to $\chi$ is the unique irreducible quotient of $I(\chi\delta_B^{-1/2})$ which we denote by $St^{\chi}$. If $\chi$ is unramified and trivial on $\mathcal{Z}$ then we call this a Steinberg representation.
 \end{definition}
The Steinberg representation is the generic component of this principal series representation. A character $\zeta: T\to\mathbb{C}^{\times}$ is regular if $\zeta=\zeta^{w}$ implies $w=1$. For $\chi=\tau\circ det$ as above satisfying $\chi^{w}=\chi$ we observe that $\chi\delta_B^{-1}$ is a regular character of $T$ i.e. $\chi\delta_B^{-1}=(\chi\delta_B^{-1})^{w}\iff w=1$. \\
The Jacquet functor $V\to(V)_N$ is an exact functor from the category of smooth representations of $G$ to the category of smooth representations of the maximal torus $T$ and can be computed explicitly using a trivial modification of Lemma 8.1.2 of Casselman \cite{cass}:
\begin{proposition} \label{jacdim}
   For any character $\chi$ as above we have that $(St^{\chi})_{N}\simeq \chi\delta_B$.
\end{proposition}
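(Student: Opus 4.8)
The plan is to reduce to the untwisted Steinberg and then pin down a single Jacquet exponent. First I would record the twisting identity $St^{\chi}\simeq(\tau\circ\det)\otimes St$, where $St=St^{\mathbf{1}}$. In the notation $I(\psi)=Ind_B^G(\delta_B^{1/2}\psi)$ we have $I(\chi\delta_B^{-1/2})=Ind_B^G(\chi)$, and since $\chi=\tau\circ\det$ extends to a character of all of $G$ the tensor identity gives $Ind_B^G(\chi)\simeq\chi\otimes I(\delta_B^{-1/2})$. Tensoring by the invertible one-dimensional $\chi$ is an exact auto-equivalence preserving the lattice of subrepresentations, so it carries the unique irreducible quotient of $I(\delta_B^{-1/2})$ to that of $I(\chi\delta_B^{-1/2})$; hence $St^{\chi}\simeq\chi\otimes St$. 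Because $\chi$ is trivial on $N$ (determinants on $N$ are $1$), the action of $N$ is identical in $St$ and in $\chi\otimes St$, so their spaces of $N$-coinvariants coincide and $T$ acts on $(\chi\otimes St)_N$ through $\chi|_T$ times its action on $(St)_N$. Thus $(St^{\chi})_N\simeq\chi|_T\otimes(St)_N$, and it suffices to prove $(St)_N\simeq\delta_B$.

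Next I would bound $(St)_N$ from above. Writing the defining sequence $0\to W\to I(\delta_B^{-1/2})\to St\to0$ and using exactness of the Jacquet functor, $(St)_N$ is a quotient $T$-module of $(I(\delta_B^{-1/2}))_N$. By the computation of Casselman \cite{cass} (the modified Lemma 8.1.2 alluded to above), the latter admits a composition series whose factors are exactly the $T$-characters $\{\,\delta_B^{1/2}(\delta_B^{-1/2})^{w}\ :\ w\in\mathbf{W}\,\}$. The $\chi=\mathbf{1}$ case of the regularity already noted in the text makes these $|\mathbf{W}|$ characters pairwise distinct, so each occurs with multiplicity one. Consequently every irreducible $T$-subquotient of $(St)_N$ is one of these orbit characters, each with multiplicity at most one.

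To isolate the surviving character I would invoke square-integrability. The Steinberg representation is square-integrable modulo the center, so by Casselman's criterion \cite{cass} every exponent occurring in its Jacquet module is strictly dominant. Among the orbit characters the only strictly dominant one is the $w=w_0$ term, $\delta_B^{1/2}(\delta_B^{-1/2})^{w_0}=\delta_B^{1/2}\cdot\delta_B^{1/2}=\delta_B$, using $(\delta_B)^{w_0}=\delta_B^{-1}$. Since $St$ is not supercuspidal its Jacquet module is nonzero, so $(St)_N\simeq\delta_B$ with multiplicity one. Combined with the first paragraph this yields $(St^{\chi})_N\simeq\chi\delta_B$.

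The hard part is the last step: exactness of the Jacquet functor only exhibits $(St)_N$ as a quotient of the full Weyl-orbit filtration and does not by itself say which orbit members survive. One must bring in extra input — either Casselman's square-integrability criterion as above, or an explicit determination of the maximal submodule $W$, or an alternating-sum (Zelevinsky--Aubert) identity $[(St)_N]=\sum_{P\supseteq B}(-1)^{\bullet}[(Ind_P^G\mathbf{1})_N]$ that telescopes to $\delta_B$. In all of these the genuine care is bookkeeping: keeping the $\delta_B^{1/2}$ normalization factors and the dominance conventions consistent so that precisely the single character $\delta_B$ is identified.
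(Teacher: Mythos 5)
Your proof is correct, but it takes a genuinely different route from the paper, which in fact gives no argument of its own: Proposition \ref{jacdim} is justified there only by appeal to ``a trivial modification of Lemma 8.1.2 of Casselman \cite{cass}'', i.e.\ to Casselman's explicit computation of the Jacquet module of the untwisted Steinberg, the ``trivial modification'' being exactly your first paragraph ($St^{\chi}\simeq\chi\otimes St$ via the tensor identity, hence $(St^{\chi})_N\simeq\chi|_T\otimes(St)_N$, valid since $\chi$ is trivial on $N$). Where Casselman identifies $(St)_N$ by direct analysis of the filtration of the induced representation (equivalently the alternating-sum description underlying \cite{cass}), you instead squeeze $(St)_N$ between an upper bound and a selection principle: the unnormalized coinvariants of $I(\delta_B^{-1/2})$ have semisimplification $\bigoplus_{w\in\mathbf{W}}\delta_B^{1/2}(\delta_B^{-1/2})^{w}$, each factor with multiplicity one by regularity of $\delta_B$; only the $w_0$ term equals $\delta_B$ and satisfies the strict-dominance inequalities of the square-integrability criterion; and $(St)_N\neq 0$ since $St$ is not supercuspidal. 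All of these steps are sound, and your reduction-to-untwisted step buys real generality: the proposition is stated for arbitrary smooth $\tau$, and $St^{\chi}$ need not be square-integrable when $\tau$ is not unitary, so twisting \emph{before} invoking unitarity-sensitive input is essential, not cosmetic. The one point to watch is logical order: in the standard references square-integrability of $St$ is itself \emph{deduced} from the computation $r_N(St)=\delta_B^{1/2}$ via the same Casselman criterion, so quoting it here is circular unless you cite an independent proof (e.g.\ Borel--Wallach, or the character-theoretic results of \cite{cass}). You flag this yourself, and either of your fallback options --- explicitly identifying the maximal proper submodule, or the alternating-sum (Aubert--Zelevinsky) identity --- closes that loop; the latter is essentially the paper's (Casselman's) own route.
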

\begin{definition}
    An irreducible admissible representation $(\pi, V )$ of $G$ is called Iwahori spherical if it has a non-zero vector fixed by the Iwahori subgroup $J$. In this case the space of fixed vectors $V^J$ is a simple module for the Iwahori Hecke algebra $\mathcal{H}(G,J)$.
\end{definition}
Assuming $\tau^n=1$ and $\tau$ unramified forces $\tau(\varpi^k)=\epsilon^k$ for some $n$-th root of unity $\epsilon$. In this case Proposition 2.1 of \cite{cass2} gives us the Casselman basis of $I(\chi\delta_B^{1/2})^J$ which consists of the functions:
$$f_w(g)=\begin{cases}
        \chi(b)\delta_B(b) & \text{if } g=bwj, b\in B,j\in J,   \\
        0 & \text{otherwise} 
    \end{cases}.
$$
We write again the Iwahori factorization of the Iwahori subgroup $J=N_{\mathcal{O}}T^0N^-_{\mathfrak{p}}$. Then we have the following result due to Borel \cite{bor} which appears in more detail in Casselman's notes \cite{cassnotes}:
\begin{proposition} \label{jacq}
    For an admissible irreducible representation $(\pi,V)$ of $G$ the canonical projection $V\to (V)_N$ induces an isomorphism $V^J\to(V)_N^{T^0}$.
\end{proposition}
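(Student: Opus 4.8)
The plan is to prove the two directions—well-definedness together with injectivity, and surjectivity—separately, using the Iwahori factorization $J=N_{\mathcal O}T^0N^-_{\mathfrak p}$ and the standard theory of Jacquet modules (Jacquet's Lemma) as developed in Casselman's notes \cite{cassnotes,cass}. First I would record the elementary facts. Write $p\colon V\to (V)_N$ for the canonical projection and $V(N)=\ker p$. Since $p$ is $T$-equivariant and $T^0\subset J$ fixes any $v\in V^J$, its image $p(v)$ lies in $(V)_N^{T^0}$; thus $p$ does restrict to a map $V^J\to (V)_N^{T^0}$. Admissibility guarantees that both spaces are finite dimensional and that $(V)_N$ is an admissible $T$-module, which is what makes the finite-dimensional linear algebra below legitimate.

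For injectivity I would invoke Jacquet's Lemma in the form: for a strictly dominant $d\in T$ (so that $\alpha(d)\in\mathfrak p$ for every positive root $\alpha$) the operator $T_d=e_{N_{\mathcal O}}\circ\pi(d)$ on $V^{N_{\mathcal O}}$ has $\ker\big(V^{N_{\mathcal O}}\to (V)_N\big)$ equal to the union of the kernels of the powers $T_d^m$. Hence it suffices to show that $T_d$ has no nilpotent vectors on $V^J$. Here I would use that for $v\in V^J$ one has $T_d v$ proportional to the Hecke operator $X_d v=\sum_i \pi(\gamma_i)v$ of Lemma \ref{cosum1} (over the cosets $JdJ=\bigcup_i\gamma_iJ$), and that $X_d$, corresponding for dominant $d$ to an invertible Bernstein element of $\mathcal H(G,J)$, acts invertibly on the finite-dimensional module $V^J$. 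Invertibility precludes nilpotents, so $V^J\cap V(N)=0$ and $p|_{V^J}$ is injective.

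For surjectivity, given $\bar v\in (V)_N^{T^0}$, I would first lift it into $V^{N_{\mathcal O}T^0}$: Jacquet's Lemma gives that $V^{N_{\mathcal O}}\to(V)_N$ is onto, and averaging a lift over the compact group $T^0$ (which normalizes $N_{\mathcal O}$ and fixes $\bar v$) produces $w\in V^{N_{\mathcal O}T^0}$ with $p(w)=\bar v$. The decisive point is to manufacture a vector that is in addition fixed by the opposite part $N^-_{\mathfrak p}$. For this I would exploit that $d$ strictly dominant expands $N^-$: by smoothness the stabilizer of $w$ is open, so for $k$ large enough $\pi(d^k)w$ is fixed by $N^-_{\mathfrak p}$, while $p(\pi(d^k)w)=d^k\cdot\bar v$ is still $T^0$-fixed because $T$ is abelian and acts on $(V)_N$. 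Applying the idempotent $e_J=e_{N_{\mathcal O}}e_{T^0}e_{N^-_{\mathfrak p}}$ (the Iwahori factorization makes the Haar measure split) to $w'=\pi(d^k)w$ then yields an honest element of $V^J$ whose image under $p$ is $d^k\cdot\bar v$: the factor $e_{N^-_{\mathfrak p}}$ acts trivially on the already $N^-_{\mathfrak p}$-fixed $w'$, $e_{T^0}$ fixes the $T^0$-invariant class $d^k\cdot\bar v$, and $e_{N_{\mathcal O}}$ changes nothing modulo $V(N)$.

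Finally I would upgrade this to genuine surjectivity by a finite-dimensional argument: choosing a basis $e_1,\dots,e_r$ of $(V)_N^{T^0}$ and a single $k$ large enough to work for all $e_i$ simultaneously, the vectors $d^k\cdot e_i$ all lie in the image of $p|_{V^J}$ and, since $d^k$ acts invertibly on $(V)_N^{T^0}$, they again form a basis; hence the image is everything. The main obstacle is exactly this surjectivity step—specifically producing the $N^-_{\mathfrak p}$-fixedness without destroying either the $N_{\mathcal O}T^0$-invariance or the value of the class in $(V)_N$—which is precisely the content of Jacquet's Lemma and the reason the opposite unipotent in the Iwahori factorization must be handled through the contracting and expanding behaviour of a dominant torus element rather than by naive averaging.
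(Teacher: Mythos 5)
Your proposal is correct, and it matches the paper's approach in the only sense available: the paper offers no proof of this proposition at all, deferring to Borel \cite{bor} and Casselman's notes \cite{cassnotes}, and what you have written is a faithful reconstruction of exactly that standard argument --- Jacquet's lemma via the contraction $T_d=e_{N_{\mathcal O}}\pi(d)$ for injectivity (correctly observing that on $V^J$ it is a scalar multiple of the invertible Hecke element $X_d$, so it preserves $V^J$ and admits no nilpotent vectors), and the Iwahori factorization $e_J=e_{N_{\mathcal O}}e_{T^0}e_{N^-_{\mathfrak p}}$ combined with the expanding action of a strictly dominant $d^k$ on $N^-$ for surjectivity. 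Your closing step is also sound, since $d$ is a group element of the abelian $T$ and hence acts invertibly on the finite-dimensional space $(V)_N^{T^0}$, so hitting a basis by $d^k$ indeed forces the image to be everything.
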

From this point we assume that $\tau$ is unramified with $\tau^n=1$. By extension $\chi$ is also unramified.
\begin{corollary} \label{stdim}
    By the above proposition and Proposition \ref{jacdim}, and since $\delta_B$ and $\chi$ are unramified and thus $T^0$ invariant, it follows that  $dim((St^{\chi})_N^{T^0})=1$ meaning $dim((St^{\chi})^J)=1$.
\end{corollary}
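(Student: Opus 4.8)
The plan is to chain the two preceding propositions together with the elementary observation that an unramified character of $T$ is trivial on $T^0$. First I would apply Proposition \ref{jacq} to the admissible irreducible representation $St^{\chi}$: the canonical projection induces an isomorphism $(St^{\chi})^J \xrightarrow{\sim} (St^{\chi})_N^{T^0}$, so computing $\dim (St^{\chi})^J$ reduces to computing the dimension of the $T^0$-fixed subspace of the Jacquet module.

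Next I would invoke Proposition \ref{jacdim}, which identifies $(St^{\chi})_N$ with the one-dimensional $T$-module on which $T$ acts through the character $\chi\delta_B$. On a one-dimensional space the fixed subspace $(St^{\chi})_N^{T^0}$ is either zero or the whole line, according to whether $\chi\delta_B$ is nontrivial or trivial on $T^0$. Thus the entire content of the corollary comes down to checking this restriction. For $d(u_1,\dots,u_n)\in T^0$ we have $u_i\in\mathcal{O}^*$, so $|u_i|=1$ and $\delta_B(d(u_1,\dots,u_n))=\prod_{i}|u_i|^{n+1-2i}=1$; likewise $\chi(d(u_1,\dots,u_n))=\tau(u_1\cdots u_n)=1$ since $u_1\cdots u_n\in\mathcal{O}^*$ and $\tau$ is unramified. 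Hence $\chi\delta_B$ is trivial on $T^0$, the fixed subspace is all of $(St^{\chi})_N$, and $\dim (St^{\chi})_N^{T^0}=1$, which gives $\dim (St^{\chi})^J=1$ by the isomorphism of the first step.

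Because the argument is a direct composition of results already in hand, there is no genuine obstacle here. The only point that requires care is that the isomorphism furnished by Proposition \ref{jacq} be honestly $T^0$-equivariant, so that $J$-fixed vectors correspond precisely to $T^0$-fixed vectors in the Jacquet module rather than merely to some abstract subspace of the same dimension; this equivariance is exactly what that proposition asserts, so invoking it correctly is the whole of the matter.
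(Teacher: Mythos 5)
Your proof is correct and follows exactly the paper's argument: the corollary is proved by combining Proposition \ref{jacq} with Proposition \ref{jacdim} and noting that $\chi\delta_B$ is trivial on $T^0$ because both characters are unramified. Your explicit verification that $\delta_B(d(u_1,\dots,u_n))=\prod_i|u_i|^{n+1-2i}=1$ and $\chi(d(u_1,\dots,u_n))=\tau(u_1\cdots u_n)=1$ for units $u_i\in\mathcal{O}^*$ simply spells out the detail the paper leaves implicit.
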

Since the Hecke Algebra $\mathcal{H}(G,J)$ acts on $St^{\chi}_{J}$ and since this space is one dimensional it follows that the Hecke Algebra acts with a character $\rho_{\chi}:\mathcal{H}(G,J) \to \mathbb{C}$.
It is well known that the generators $X_{s_i}$ act on $(St^{\chi})^J$ with the scalar $-1$. We will give an alternative proof and also obtain the action of the generator $X_u$, that is, we will compute $\rho_{\chi}(X_u)$.
\begin{proposition}(p.23 of \cite{reeder}) \label{fixvec}
    The Iwahori fixed part of the full induced representation $I(\chi\delta_B^{1/2})^J$ contains up to scalar multiplication exactly two eigenvectors of $\mathcal{H}_{\mathbf{W}}$ which are given by the formulas $\phi^-(g)=\sum\limits_{w\in W}(-q)^{-\ell(w)}f_w(g)$ and $\phi^+(g)=\sum\limits_{w\in W}f_w(g)$. Furthermore, $\phi^-$ is an eigenvector of the full Hecke algebra $\mathcal{H}(G,J)$.
\end{proposition}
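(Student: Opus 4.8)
The plan is to let the finite Hecke algebra $\mathcal{H}_{\mathbf{W}}$ act explicitly on the Casselman basis $\{f_w\}_{w\in\mathbf{W}}$ of $I(\chi\delta_B^{1/2})^J$, reduce the common-eigenvector problem to a finite linear recursion, and then identify $\phi^-$ with the $J$-fixed line of the Steinberg \emph{sub}representation to obtain the full Hecke algebra statement. Recall that $\{f_w\}$ is a basis because any $\phi\in I(\chi\delta_B^{1/2})^J$ is determined by its values on the Weyl representatives, with $f_w(w')=\delta_{w,w'}$, so $\phi=\sum_w\phi(w)f_w$.

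First I would compute the action of each finite generator $X_{s_i}$ ($1\le i\le n-1$). Since $X_{s_i}f_w$ again lies in the induced representation, it is determined by its values on the $w'\in\mathbf{W}$, and by Lemma \ref{cosum1} together with the length-one case of Lemma \ref{cosrep} one has $(X_{s_i}f_w)(w')=\sum_{t\in\mathcal{O}/\mathfrak{p}}f_w\bigl(x_{w'\alpha_i}(t)\,w's_i\bigr)$. The computation splits on the sign of $w'\alpha_i$: when $w'\alpha_i$ is positive the unipotent factor is absorbed on the left into $B$, and when it is negative one performs the rank-one ($SL_2$) Bruhat decomposition of $x_{w'\alpha_i}(t)$. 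The crucial observation is that every torus element produced in this reduction has unit entries and determinant one, so $\chi\delta_B$ evaluates to $1$ on it; the action is therefore independent of $\chi$ and one obtains the right regular formula
\begin{equation*}
X_{s_i}f_w=\begin{cases} f_{ws_i}, & \ell(ws_i)>\ell(w),\\[2pt] qf_{ws_i}+(q-1)f_w, & \ell(ws_i)<\ell(w).\end{cases}
\end{equation*}
I expect this rank-one reduction --- tracking the positivity of $w'\alpha_i$ for every $w'$ and verifying that all torus contributions are trivial --- to be the main obstacle, and the step that simultaneously pins down the coefficients and the vanishing of the character dependence.

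Next I would classify the common eigenvectors. If $\phi$ is an eigenvector for all $X_{s_i}$, the quadratic relation forces each eigenvalue into $\{q,-1\}$, and the braid relations force a single common value $\lambda$ across all $i$ (using connectedness of the $A_{n-1}$ diagram). Writing $\phi=\sum_w c_w f_w$ and comparing coefficients of $f_v$ in $X_{s_i}\phi=\lambda\phi$ yields the recursion $c_{vs_i}=(\lambda/q)c_v$ when $\ell(vs_i)>\ell(v)$ and $c_{vs_i}=(\lambda-q+1)c_v$ otherwise. For $\lambda=q$ this forces all $c_w$ equal, i.e.\ $\phi^+=\sum_w f_w$; for $\lambda=-1$ it forces $c_w=(-q)^{-\ell(w)}c_e$, i.e.\ $\phi^-=\sum_w(-q)^{-\ell(w)}f_w$. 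Each solution is unique up to scalar, which establishes both that there are exactly two eigenvectors and the stated formulas.

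Finally, for the full Hecke algebra claim I would exhibit $\mathbb{C}\phi^-$ as the $J$-fixed line of a subrepresentation. By Proposition \ref{jacdim} the normalized Jacquet module of $St^\chi$ is $\chi\delta_B^{1/2}$, so Frobenius reciprocity gives $\operatorname{Hom}_G\bigl(St^\chi, I(\chi\delta_B^{1/2})\bigr)=\operatorname{Hom}_T(\chi\delta_B^{1/2},\chi\delta_B^{1/2})\neq 0$, whence $St^\chi$ embeds as a subrepresentation. Its fixed space $(St^\chi)^J$ is one-dimensional by Corollary \ref{stdim} and, being the $J$-fixed space of a genuine subrepresentation, is stable under all of $\mathcal{H}(G,J)$. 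As an $\mathcal{H}_{\mathbf{W}}$-eigenline inside $I(\chi\delta_B^{1/2})^J$ it must equal $\mathbb{C}\phi^+$ or $\mathbb{C}\phi^-$; but $\phi^+$ takes the value $1$ on every $w'\in\mathbf{W}$, so it is the $K$-fixed spherical vector, whereas the Steinberg carries no nonzero $K$-fixed vector. Hence $(St^\chi)^J=\mathbb{C}\phi^-$ and $\phi^-$ is an eigenvector of the full Hecke algebra. This also explains the asymmetry in the statement: $\phi^+$ spans the spherical line, which is realized as a \emph{quotient} rather than a sub of $I(\chi\delta_B^{1/2})$, so its line need not be $\mathcal{H}(G,J)$-stable.
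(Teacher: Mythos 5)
Your proposal is correct, and its overall architecture matches the paper's: classify the $\mathcal{H}_{\mathbf{W}}$-eigenvectors in the Casselman basis, then identify $\phi^-$ with the Iwahori line of the Steinberg subrepresentation and rule out $\phi^+$ because it is the $K$-spherical vector. The central computation, however, is carried out by a genuinely different mechanism. You compute the full rank-one action $X_{s_i}f_w=f_{ws_i}$ if $\ell(ws_i)>\ell(w)$ and $X_{s_i}f_w=qf_{ws_i}+(q-1)f_w$ otherwise (which I checked is correct under the paper's conventions, the torus contributions being trivial since $\chi$ is unramified), and then solve the resulting coefficient recursion $c_{vs_i}=(\lambda/q)c_v$ or $c_{vs_i}=(\lambda-q+1)c_v$. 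The paper instead avoids all $SL_2$ Bruhat reductions: it evaluates only at the identity, using Lemma \ref{cosrep} to get $X_w(f_{w'})(1)=q^{\ell(w)}\delta_{w,w'}$ for every $w\in\mathbf{W}$, and compares with the character values $\rho^{\pm}(X_w)=(\mp 1)^{\ell(w)}$ obtained from length-additivity $X_aX_b=X_{ab}$; equating the two expressions for $X_w(\eta)(1)$ reads off $\lambda_w$ directly with no recursion. Your route is heavier but buys the complete matrix of the $\mathcal{H}_{\mathbf{W}}$-action on the basis, and along the way you prove two things the paper merely asserts: that the quadratic and braid relations (plus connectedness of the $A_{n-1}$ diagram) leave only the two characters $\lambda\in\{q,-1\}$, and --- via Proposition \ref{jacdim} and Frobenius reciprocity $\operatorname{Hom}_G(St^\chi,I(\chi\delta_B^{1/2}))\cong\operatorname{Hom}_T(\chi\delta_B^{1/2},\chi\delta_B^{1/2})\neq 0$ --- that $St^\chi$ actually embeds as a subrepresentation of $I(\chi\delta_B^{1/2})$, a step the paper's phrase ``the Steinberg subrepresentation'' takes for granted. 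The one point where you are no more rigorous than the paper is the final exclusion: both treat as standard the fact that the Steinberg has no nonzero $K$-fixed vector (equivalently, that the spherical constituent of this principal series is not $St^\chi$); your closing heuristic that $\mathbb{C}\phi^+$ sits over the spherical \emph{quotient} is a nice explanation of the asymmetry but is not load-bearing and need not be.
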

\begin{proof}
    There are only two complex characters of the finite part of $\mathcal{H}(G,J)$ given by $\rho^-:X_{s_{\alpha}}\to -1, \forall\alpha\in\Delta$ and $\rho^+:X_{s_{\alpha}}\to q,\forall\alpha\in\Delta$. For each character we will find the space of equivariant vectors and show that it is given by the vectors $\phi^-$ and $\phi^+$ defined above. Let $\eta(g)=\sum\limits_{w\in W}\lambda_wf_w(g)$ be any eigenfunction in $I(\chi\delta_B^{1/2})$ corresponding to $\rho^-$ and similarly $\theta(g)=\sum\limits_{w\in W}\lambda'_wf_w(g)$ an eigenfunction corresponding to $\rho^+$. From Lemma \ref{cosrep} and induction: $X_w(\eta)(1)=\rho^-(X_w)\eta^-(1)=(-1)^{\ell(w)}\lambda_1$ and $X_w(\theta)(1)=\rho^+(X_w)\theta(1)=q^{\ell(w)}\lambda'_1$ for any $w\in\bf{W}$. We now observe that for any $w'\in \bf{W}$: $$X_w(f_{w'})(1)=\sum\limits_{(t_{\alpha})\in(\mathcal{O}/\mathfrak{p})^{\ell(w)}}f_{w'}(\prod\limits_{\alpha\in\Phi|w^{-1}\alpha\in\Phi^-}x_{\alpha}(t_{\alpha})w)=q^{\ell(w)}\delta_{w,w'}$$ where $\delta$ is the indicator function. Applying $X_w$ to the above eigenvectors: $$X_w(\eta)(1)=q^{\ell(w)}\lambda_w \text{ 
 and  } X_w(\theta)(1)=q^{\ell(w)}\lambda'_w$$ and thus we obtain the coefficient relations: $$\lambda_w=(-q)^{-\ell(w)}\lambda_1 \text{  and  } \lambda'_w=\lambda_1'$$ in each case respectively which implies $\eta(g)=\lambda_1\phi^-(g)$ and $\theta(g)=\lambda_1'\phi^+(g)$. These function are both $\mathcal{H}_{\mathbf{W}}$ eigenfunctions. Indeed $\phi^+$ corresponds to the $K$ spherical vector in $I(\chi\delta_B^{1/2})$ and is therefore not inside the Steinberg subrepresentation. But by Corollary \ref{stdim} the Steinberg contains an Iwahori fixed vector which has to be $\phi^-$ and is therefore an eigenfunction of the full $\mathcal{H}(G,J)$.
\end{proof} 
\par
In the next Proposition we will obtain the action of the generators of the Hecke Algebra on $\phi^-$. 

\par

\begin{proposition} \label{char}
The Hecke algebra character of the twisted Steinberg $St^{\chi}$ denoted $\rho_{\chi}:\mathcal{H}(G,J)\to\mathbbm{C}$ and corresponding to the unramified character $\chi(\varpi^{\bar{k}})=\tau(det(\varpi^{\bar{k}}))=\epsilon^{\sum\limits_{i=1}^{n}k_i}$ is given by $$\rho_{\chi}(X_{s_i})=-1$$ for all simple reflections and 
$$\rho_{\chi}(X_u)=(-1)^{n-1}\epsilon.$$
\end{proposition}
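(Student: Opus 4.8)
The plan is to treat the finite-type relations and the unit $X_u$ separately, reducing the whole statement to one explicit evaluation of $\phi^-$.

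The values $\rho_\chi(X_{s_i}) = -1$ for the \emph{finite} simple reflections $s_1,\dots,s_{n-1}$ are immediate from Proposition \ref{fixvec}: since $\dim (St^\chi)^J = 1$ by Corollary \ref{stdim}, the Iwahori fixed vector is forced to be $\phi^-$, and $\phi^-$ is by construction the $\rho^-$-eigenvector of $\mathcal{H}_{\mathbf{W}}$, so each $X_{s_i}$ with $1\le i\le n-1$ acts by $-1$. For the affine reflection $s_0$ I would compute nothing new but instead use relation 3 of the Iwahori--Matsumoto presentation: taking $i=n-1$ gives $X_u X_{s_{n-1}} = X_{s_0}X_u$, hence $X_{s_0} = X_u X_{s_{n-1}} X_u^{-1}$. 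Relation 2 forces $X_u^n=1$, so $\rho_\chi(X_u)$ is a nonzero $n$-th root of unity; applying the multiplicative character $\rho_\chi$ to the conjugation relation then yields $\rho_\chi(X_{s_0}) = \rho_\chi(X_{s_{n-1}}) = -1$.

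The substance is the computation of $\rho_\chi(X_u)$. The key structural point is that $u$ is a length-zero element: it normalizes $J$, so $JuJ = uJ$ is a single coset. By Lemma \ref{cosum1} the Hecke action is then just right translation, $X_u(\phi^-)(x) = \phi^-(xu)$, and evaluating at $x=1$ gives $\rho_\chi(X_u)\,\phi^-(1) = \phi^-(u)$. It remains to evaluate $\phi^-$ at $1$ and at $u$ through the Casselman basis. At the identity only the $w=1$ term survives, so $\phi^-(1) = f_1(1) = 1$. For $\phi^-(u)$ I would first put $u$ in Bruhat form: since its first column is $\varpi e_n$ and its $j$-th column is $e_{j-1}$ for $j\ge 2$, one has $u = w\,\mathrm{diag}(\varpi,1,\dots,1) = \mathrm{diag}(1,\dots,1,\varpi)\,w$, where $w$ is the $n$-cycle with one-line notation $[n,1,2,\dots,n-1]$. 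Setting $d' = \mathrm{diag}(1,\dots,1,\varpi)\in T\subset B$ exhibits $u\in BwJ$, so only the $f_w$-term contributes and $\phi^-(u) = (-q)^{-\ell(w)}\chi(d')\delta_B(d')$.

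Plugging in the three ingredients finishes the proof: the permutation $w$ has its only descent at the front, so $\ell(w)=n-1$; the modular character gives $\delta_B(d') = |\varpi|^{1-n} = q^{n-1}$; and $\chi(d') = \tau(\det d') = \tau(\varpi) = \epsilon$. Hence $\phi^-(u) = (-q)^{-(n-1)}\epsilon\, q^{n-1} = (-1)^{n-1}\epsilon$, and dividing by $\phi^-(1)=1$ gives $\rho_\chi(X_u) = (-1)^{n-1}\epsilon$ (one can sanity-check that $\big((-1)^{n-1}\epsilon\big)^n = 1$, consistent with $X_u^n=1$). The step I expect to be the main obstacle is the structural claim that $JuJ=uJ$ is a single coset, together with the bookkeeping in the Bruhat decomposition $u=d'w$: the rewriting $u = wd = (wdw^{-1})w$ is precisely what moves the naive torus factor $\mathrm{diag}(\varpi,1,\dots,1)$ to $d' = \mathrm{diag}(1,\dots,1,\varpi)$, and it is this swap, together with $\ell(w)=n-1$, that produces the factor $(-1)^{n-1}$; any slip there changes the resulting root of unity.
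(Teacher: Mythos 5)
Your proof is correct and takes essentially the same route as the paper: the paper likewise identifies the Iwahori-fixed vector with $\phi^-$ via Proposition \ref{fixvec} and Corollary \ref{stdim}, uses Lemma \ref{cosum1} (with $JuJ=uJ$ a single coset) to reduce $\rho_\chi(X_u)$ to $\phi^-(u)$, writes $u=\varpi^{(0,\dots,0,1)}s_{n-1}\cdots s_1$ with $\ell(s_{n-1}\cdots s_1)=n-1$, and evaluates $f_{u'}(u)=\chi(d')\delta_B(d')=\epsilon q^{n-1}$ to obtain $(-1)^{n-1}\epsilon$. Your explicit treatment of $X_{s_0}$ via $X_{s_0}=X_uX_{s_{n-1}}X_u^{-1}$ is a minor detail the paper leaves implicit when it reduces to the generators $X_{s_1},\dots,X_{s_{n-1}},X_u$.
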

\begin{proof}
The action of $X_{s_i}$ for each simple reflection $s_i$ is easily computed using the action of $X_{s_i}$ on $f_w$ given in the proof of Proposition \ref{fixvec}. Hence it is sufficient to compute the action of the affine part of $\mathcal{H}(G,J)$, that is $\rho_{\chi}(X_u)$.
We now observe that $u=\varpi^{(0,\dots,0,1)}s_{n-1}...s_1$ and set $u'=s_{n-1}...s_1$ and $d_{u'}=\varpi^{(0,\dots,0,1)}$. From Proposition \ref{fixvec} we have that $T_u(\phi)(g)=\sum\limits_{w\in W}(-q)^{-\ell(w)}T_u(f_w)(g)$. For $g=1$ then, from Lemma \ref{cosum1} $T_u(f_w)(1)=f_w(u)=f_w(d_{u'}u')$. This means that $T_u(f_w)(1)=0$ unless $w=u'$ in which case: $$T_u(f_{u'})(1)=f_{u'}(d_{u'}u')=\delta_B(d_{u'})\chi(d_{u'})=q^{n-1}\epsilon.$$ This readily implies now that:
$$T_u(\phi)(1)=(-q)^{-\ell(u')}q^{n-1}=(-q)^{-(n-1)}q^{n-1}\epsilon=(-1)^{n-1}\epsilon.$$

\end{proof} 

\section{Iwahori fixed Whittaker functions for $GL(n)$}
We compute the Whittaker function associated to a Steinberg representation. Let $\psi$ be an unramified additive character $\psi:F\to\mathbbm{C}^{\times}$ we extend it to a character $\psi:N \rightarrow \mathbbm{C}^{\times}$ by 
$\psi(n)=\sum\psi(n_{i,i+1})$. Let $\mathcal{W}(\pi_{St^{\chi}},\psi)$ be the Whittaker model of $St^{\chi}$ with character $\psi$ then $\mathcal{W}(\pi_{St^{\chi}},\psi)\cong St^{\chi}$. We are interested in $\mathcal{W}(\pi_{St^{\chi}},\psi)^J$ which by Corollary \ref{stdim} has dimension $1$. 
These generalised Steinberg representations correspond to twists of the sign representation of $\mathcal{H}(G,J)$ by a root of unity $\epsilon\in\mathbbm{C}^{\times}$ as shown in Proposition \ref{char}. Let $W\in \mathcal{W}(\pi_{St^{\chi}},\psi)^J$ with $W\not\equiv 0$, then $W(gj)=W(g),\forall g\in G$ and $j\in J$ and $(F*W)(g)=\rho_{\chi}(F)W(g)$ for all $F\in\mathcal{H}(G,J)$, with $\rho_{\chi}$ the character of $\mathcal{H}(G,J)$ as above.
From Lemma \ref{decomp} it is clear that the values of $W$ are completely determined by the elements in $TW$ and specifically if $d=d(t_1,...,t_n)$ then $W(dw)=W(d(|t_1|,...,|t_n|)w)$ since $d(t_1,t_2,...t_n)w$ and $d(|t_1|, ...|t_n|)w$ differ by en element of $J$. This means that we can limit our efforts to computing the value of $W$ solely on $\varpi^{\bar{k}}w$. In what follows we always assume $W$ is as above.\\
Define $\langle\alpha_{i,j},\bar{k}\rangle=k_i-k_j$. The following result is a variant of a result which appears in \cite{Bump}:
\begin{lemma} \label{supp}
    Let $d=\varpi^{(k_1,...,k_{n-1},k_n)}=\varpi^{\bar{k}}$, then $W(dw)=0$ unless $\langle\alpha,\bar{k}\rangle\geq \begin{cases}
        0 & \text{if $w^{-1}\alpha\in \Phi^+$} \\
        -1 & \text{if $w^{-1}\alpha\in \Phi^-$} 
    \end{cases}$ for all $\alpha\in\Delta$. A weight $\bar{k}$ satisfying this condition will be called $w$-dominant.
\end{lemma}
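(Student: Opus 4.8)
The plan is to play the two defining invariances of $W$ off against each other, one simple root at a time: left $(N,\psi)$-equivariance on the one hand and right $J$-invariance on the other. Fix $\alpha\in\Delta$, take a root vector $x_\alpha(u)$ on the left of $dw$, and push it to the right. Using the torus rule $d^{-1}x_\alpha(u)d=x_\alpha(\alpha(d)^{-1}u)$ together with $\alpha(\varpi^{\bar k})=\varpi^{\langle\alpha,\bar k\rangle}$, followed by the Weyl rule $w^{-1}x_\beta(t)w=x_{w^{-1}\beta}(t)$, I would obtain the identity
$$x_\alpha(u)\,dw = dw\cdot x_{w^{-1}\alpha}\!\left(\varpi^{-\langle\alpha,\bar k\rangle}u\right).$$
Since $\alpha$ is simple, the super-diagonal entry of $x_\alpha(u)$ is $u$, so left equivariance reads $W(x_\alpha(u)dw)=\psi(u)W(dw)$.

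The second step is to detect, for a suitable range of $u$, when the right-hand factor lies in $J$, using the explicit Iwahori membership conditions: for $\beta\in\Phi^+$ one has $x_\beta(v)\in J$ exactly when $v\in\mathcal{O}$, while for $\beta\in\Phi^-$ one has $x_\beta(v)\in J$ exactly when $v\in\mathfrak{p}$. Splitting on whether $w^{-1}\alpha\in\Phi^+$ or $w^{-1}\alpha\in\Phi^-$, I would let $u$ range over the largest fractional ideal making $\varpi^{-\langle\alpha,\bar k\rangle}u$ land in $\mathcal{O}$, respectively in $\mathfrak{p}$; a valuation count shows this ideal is $\mathfrak{p}^{\langle\alpha,\bar k\rangle}$ in the first case and $\mathfrak{p}^{\langle\alpha,\bar k\rangle+1}$ in the second.

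The third step closes the argument. For $u$ in the chosen ideal the right factor lies in $J$, so right $J$-invariance gives $W(x_\alpha(u)dw)=W(dw)$, while the displayed equivariance gives $\psi(u)W(dw)$; hence $(\psi(u)-1)W(dw)=0$ for all such $u$. As $\psi$ is unramified it is nontrivial on $\mathfrak{p}^{-1}$, hence on every fractional ideal strictly larger than $\mathcal{O}$, so whenever the chosen ideal properly contains $\mathcal{O}$ there is a $u$ with $\psi(u)\neq 1$, forcing $W(dw)=0$. Reading off when this happens recovers the two thresholds: $\mathfrak{p}^{\langle\alpha,\bar k\rangle}\supsetneq\mathcal{O}$ precisely when $\langle\alpha,\bar k\rangle<0$, and $\mathfrak{p}^{\langle\alpha,\bar k\rangle+1}\supsetneq\mathcal{O}$ precisely when $\langle\alpha,\bar k\rangle<-1$. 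Taking the contrapositive over all $\alpha\in\Delta$ yields exactly the stated $w$-dominance inequalities.

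The only delicate point — and thus the main obstacle — is the bookkeeping of powers of $\varpi$, so that the extra factor of $\mathfrak{p}$ in the Iwahori condition for negative roots is tracked correctly; this single shift is what turns the threshold from $0$ into $-1$ on the $w^{-1}\alpha\in\Phi^-$ branch. One must also invoke the unramifiedness of $\psi$ at the right level, namely nontriviality on $\mathfrak{p}^{-1}$ rather than merely on $F$, so that the vanishing is triggered exactly at the boundary of dominance and not prematurely.
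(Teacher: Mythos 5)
Your proof is correct and takes essentially the same route as the paper's: you conjugate a simple root vector $x_\alpha(u)$ across $dw$ to get $dw\,x_{w^{-1}\alpha}(\varpi^{-\langle\alpha,\bar k\rangle}u)$, check Iwahori membership in the two cases ($\mathcal{O}$ for $w^{-1}\alpha\in\Phi^+$, $\mathfrak{p}$ for $w^{-1}\alpha\in\Phi^-$), and play left $\psi$-equivariance against right $J$-invariance, using that an unramified $\psi$ is nontrivial on $\mathfrak{p}^{-1}$. The only cosmetic difference is that the paper picks a single element $s$ with $|s|=q$ and $\psi(x_\alpha(s))\neq 1$ rather than ranging $u$ over the full fractional ideal as you do.
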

\begin{proof}Assume first that there exist $\alpha \in \Delta$ such that $w^{-1}(\alpha) \in \Phi^{+}$ and $\langle\alpha, \bar{k}\rangle  < 0$ or that there exist  $\alpha \in \Delta$ such that $w^{-1}(\alpha) \in \Phi^{-}$ and $\langle\alpha, \bar{k}\rangle  < -1$. Let $s\in F$ be such that $|s|=q$ and $\psi(x_{\alpha}(s))\ne 1$. Then: $\psi(x_{\alpha}(s))W(dw)=W(x_{\alpha}(s)dw)=W(dd^{-1}x_{\alpha}(s)dw)=W(dww^{-1}x_{\alpha}(\varpi^{-\langle\alpha,\bar{k}\rangle} s)w)= W(dwx_{w^{-1}\alpha}(\varpi^{-\langle\alpha,\bar{k}\rangle} s))$. Since $x_{w^{-1}\alpha}(\varpi^{-\langle\alpha,\bar{k}\rangle} s)\in J$ in each case, we then have that \\ $\psi(x_{\alpha}(s))W(dw)=W(dwx_{w^{-1}\alpha}(\varpi^{-\langle\alpha,\bar{k}\rangle} s))=W(dw)$ and thus it follows that $W(dw)=0$. 
\end{proof}
\begin{remark}
The above condition can be translated as follows: $W(dw)=0$ except when $k_i\geq k_{i+1}$ if $w^{-1}$ has an ascend at the place $i$ or $k_i\geq k_{i+1}-1$ if $w^{-1}$ has an descend at the place $i$. Thus $W(dw)\ne 0$ only if $k_1-g_1\geq k_2-g_2\geq...\geq k_n-g_n$ where $g_i=\#\{j\geq i:w^{-1}(j)>w^{-1}(j+1)\}$. 
\end{remark}

\begin{definition} 
    We denote the semigroup of $w$-dominant diagonal matrices by $T_w^+$ and define the element $d_w=\{\varpi^{\bar{k}}\in T_w^+: k_n=0 \text{ and }\langle\alpha,\bar{k}\rangle= \begin{cases}
        0 & \text{if $w^{-1}\alpha\in \Phi^+$} \\
        -1 & \text{if $w^{-1}\alpha\in \Phi^-$} 
    \end{cases},\forall\alpha\in\Delta\}$.
\end{definition}
The element $d_w$ can be realized as the unique maximal valuation element of $T_w^+/\mathcal{Z}$. It is easy to see that $T_w^+=d_wT^+$, so in particular $T^+=T_1^+\subseteq T_w^+,\forall w\in\mathbf{W}$. 
\begin{corollary}
  For any $d\in T$, Lemma \ref{supp} implies that $W(dw)=0$ unless $d\in T_w^+T^0$.
\end{corollary}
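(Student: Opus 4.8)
The plan is to reduce the claim for an arbitrary $d\in T$ to the case of a pure uniformizer power $\varpi^{\bar k}$ already handled by Lemma \ref{supp}, using the right $J$-invariance of $W$ together with the fact that $\mathbf{W}$ normalizes the compact torus $T^0$.

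First I would write $d=d(t_1,\dots,t_n)\in T$ and separate each entry into its valuation and unit parts, $t_i=\varpi^{k_i}u_i$ with $u_i\in\mathcal{O}^{\times}$ and $k_i\in\mathbb{Z}$. This factors $d=\varpi^{\bar k}t^0$ with $\bar k=(k_1,\dots,k_n)$ and $t^0=d(u_1,\dots,u_n)\in T^0=T\cap J$. Next I would push $t^0$ to the right of $w$: since every $w\in\mathbf{W}$ is a permutation matrix and $T^0$ consists of diagonal matrices with unit entries, the set $T^0$ is permutation-stable, so $w^{-1}t^0w\in T^0\subseteq J$. Hence
$$dw=\varpi^{\bar k}\,w\,(w^{-1}t^0w),$$
and the right $J$-invariance of $W$ gives $W(dw)=W(\varpi^{\bar k}w)$. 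This is precisely the reduction recorded just above the corollary, namely that $d(t_1,\dots,t_n)w$ and $d(|t_1|,\dots,|t_n|)w$ differ by an element of $J$.

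Finally I would apply Lemma \ref{supp} to the weight $\bar k$: it shows $W(\varpi^{\bar k}w)=0$ unless $\bar k$ is $w$-dominant, i.e.\ unless $\varpi^{\bar k}\in T_w^+$. Combining this with the factorization $d=\varpi^{\bar k}t^0$ yields $d\in T_w^+T^0$ whenever $W(dw)\neq 0$, which is the assertion (only the implication $W(dw)\neq 0\Rightarrow d\in T_w^+T^0$ is needed, so no converse is required).

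The argument is essentially bookkeeping built on the torus factorization $T=\varpi^{\bar k}T^0$ and on right $J$-invariance. The only point requiring any care is the normalization $w^{-1}T^0w=T^0$, but this is immediate from $T^0$ being permutation-stable, so I do not anticipate a genuine obstacle; the substantive content lives entirely in Lemma \ref{supp}, which the corollary merely transports from $\varpi^{\bar k}w$ to general $dw$.
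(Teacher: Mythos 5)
Your proof is correct and matches the paper's (implicit) argument exactly: the paper leaves the corollary unproved precisely because it follows from the reduction $W(d(t_1,\dots,t_n)w)=W(d(|t_1|,\dots,|t_n|)w)$ stated just before Lemma \ref{supp}, which is the same factorization $d=\varpi^{\bar k}t^0$ with $w^{-1}t^0w\in T^0\subseteq J$ that you use. Your only addition is making explicit that $\mathbf{W}$ normalizes $T^0$, which is a correct and worthwhile detail.
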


\begin{lemma} \label{permlem}
         For a Weyl element $w$ and $d\in T^+$ the following relation is satisfied:
 $$W(dw)=(-q)^{-\ell(w)}W(d).$$
\end{lemma}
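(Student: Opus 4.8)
The plan is to induct on $\ell(w)$, peeling off one simple reflection at a time and exploiting that $W$ is a Hecke eigenfunction with $X_{s_i}(W) = \rho_\chi(X_{s_i})\,W = -W$ by Proposition \ref{char}. The base case $w = 1$ is the tautology $W(d) = W(d)$. For the inductive step I would fix a reduced expression $w = w' s_i$ with $\ell(w') = \ell(w) - 1$; because the word is reduced, $\beta := w'\alpha_i$ is a positive root. The whole step then comes from evaluating the eigenvalue identity $X_{s_i}(W) = -W$ at the single point $d w'$.

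To expand the left-hand side I would invoke Lemma \ref{cosum1} together with the coset decomposition $J s_i J = \bigcup_{t \in \mathcal{O}/\mathfrak{p}} x_{\alpha_i}(t) s_i J$ coming from Lemma \ref{cosrep}, where $\alpha_i$ is the unique positive root with $s_i\alpha_i \in \Phi^-$; this produces the sum $X_{s_i}(W)(dw') = \sum_{t \in \mathcal{O}/\mathfrak{p}} W\bigl(dw'\, x_{\alpha_i}(t)\, s_i\bigr)$. Using the conjugation rules $w' x_{\alpha_i}(t) = x_{\beta}(t)\, w'$ and $d\, x_\beta(t) = x_\beta(\beta(d)\,t)\, d$, each summand becomes $W\bigl(x_\beta(\beta(d)t)\, d w\bigr)$. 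Since $d \in T^+$ and $\beta \in \Phi^+$ we have $\langle\beta,\bar k\rangle \geq 0$, so $\beta(d)\,t \in \mathcal{O}$, the element $x_\beta(\beta(d)t)$ lies in $N_{\mathcal{O}}$, and the Whittaker equivariance rewrites the summand as $\psi\bigl(x_\beta(\beta(d)t)\bigr)\,W(dw)$.

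The heart of the matter, and the only place the hypothesis $d \in T^+$ is used, is that every factor $\psi\bigl(x_\beta(\beta(d)t)\bigr)$ equals $1$: this is automatic when $\beta$ is non-simple from the definition of $\psi$ on $N$, and when $\beta$ is simple it holds because $\beta(d)\,t \in \mathcal{O}$ sits inside the conductor of the unramified character $\psi$. Consequently the character sum does not cancel but collapses to $q\,W(dw)$, and the eigenvalue identity gives $-W(dw') = q\,W(dw)$, i.e. $W(dw) = (-q)^{-1}W(dw')$. Feeding in the inductive hypothesis $W(dw') = (-q)^{-(\ell(w)-1)}W(d)$ completes the step. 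I expect the main obstacle to be exactly the control of this $\psi$-sum: dominance of $d$ is what forces every $\beta(d)t$ into $\mathcal{O}$ and kills any cancellation, whereas for a non-dominant $d$ some $\beta(d)t$ would acquire negative valuation and $\psi$ could be nontrivial there, turning the clean factor $q$ into a genuine character sum.
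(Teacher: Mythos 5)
Your proof is correct and is essentially the paper's own argument reorganized: the paper applies the single operator $X_w$ at the point $d$, obtaining the eigenvalue $(-1)^{\ell(w)}$ from the multiplicativity $X_a*X_b=X_{ab}$ for length-additive products and using the full coset decomposition of $JwJ$ from Lemma \ref{cosrep}, while you unravel that same multiplicativity into an induction along a reduced word, applying $X_{s_i}$ at $dw'$ with the rank-one decomposition $Js_iJ=\bigcup_{t\in\mathcal{O}/\mathfrak{p}} x_{\alpha_i}(t)s_iJ$ and the standard fact that $\beta=w'\alpha_i\in\Phi^+$. In both versions the crux is identical --- dominance of $d$ forces the conjugated root-group elements into $N_{\mathcal{O}}$, where the unramified $\psi$ is trivial, so the sum collapses to $q^{\ell(w)}W(dw)$ with no cancellation --- though you spell out the $\psi$-triviality (non-simple roots miss the superdiagonal, simple roots land in the conductor) that the paper leaves implicit.
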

\begin{proof}
We use the relation $X_a*X_b=X_{ab}$ if $\ell(a)+\ell(b)=\ell(ab)$. By induction we obtain: $X_{w}(W)(g)=(-1)^{\ell(w)}W(g)$.
From Lemma \ref{cosrep}: $JwJ=\underset{(t_1,\dots t_{\ell(w)})\in \mathcal({O}_F/\mathfrak{p})^{\ell(w)}}{\bigcup}\prod\limits_{j=1}^{\ell(w)}x_{\alpha_j}(t_{j})wJ$ and since $\prod\limits_{j=1}^{\ell(w)}x_{\alpha_j}(t_j)\in N_{\mathcal{O}}$ and $d\in T^+$ it follows that $d\prod\limits_{j=1}^{\ell(w)}x_{\alpha_j}(t_j)d^{-1}\in N_{\mathcal{O}}$. From Lemma \ref{cosum1} then $(-1)^{\ell(w)}W(d)=\sum\limits_{(t_1,\dots t_{\ell(w)})\in \mathcal({O}_F/\mathfrak{p})^{\ell(w)}}W(d\prod\limits_{j=1}^{\ell(w)}x_{\alpha_j}(t_j)w)=q^{\ell(w)}W(dw)\implies\\ W(dw)=(-q)^{-\ell(w)}W(d)$.
\end{proof}

\begin{theorem}[Diagonal Whittaker values] \label{diagon} ~\\
 Let $W\in \mathcal{W}(\pi,\psi)^{J}$ with $W(1)=1$ be such that 
	$$
	W(zg)=W(g),\;z\in Z, g\in G
	$$
	and that
	$$
	X_{s_i}(W)(g)=-W(g) \text{, for } i=1,\dots,n-1,\;\;\;\;X_u(W)(g)=\epsilon W(g)
	$$
	Then $W$ has the following diagonal values:
$$W(\varpi^{(k_1,k_2,\dots,k_n)})=\begin{cases}
       \epsilon^{\sum\limits_{i=1}^{n}k_i}(-1)^{(n-1)\sum\limits_{i=1}^{n}k_i}\delta_B(d)  & \text{if $\varpi^{(k_1,k_2,\dots,k_n)}\in T^+$} \\
        0 & \text{otherwise} 
    \end{cases}.$$
 \end{theorem}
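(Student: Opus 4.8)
The plan is to treat the two cases separately and to handle the dominant case by an induction that descends using the affine generator $u$.

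The vanishing (``otherwise'') case is immediate from Lemma \ref{supp} applied to the identity Weyl element $w=1$: since $w^{-1}\alpha=\alpha\in\Phi^+$ for every simple root, the lemma forces $W(\varpi^{\bar{k}})=0$ unless $\langle\alpha,\bar{k}\rangle=k_i-k_{i+1}\ge 0$ for all $i$, i.e. unless $\varpi^{\bar{k}}\in T^+$. For the dominant case I would first observe that both sides of the claimed identity are invariant under $\bar{k}\mapsto\bar{k}+(1,\dots,1)$: the left side by the hypothesis $W(zg)=W(g)$, and the right side because $\epsilon^n=1$, $n(n-1)$ is even, and $\delta_B$ is trivial on the center. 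Hence it suffices to prove the formula on representatives with $k_n=0$, and I would induct on $\sum_i k_i\ge 0$; the base case $\bar{k}=0$ is $W(1)=1$, which matches.

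The engine of the inductive step is the eigen-relation for $X_u$. Since $u$ normalizes $J$ (so $JuJ=uJ$ is a single coset, as already used in the proof of Proposition \ref{char}), Lemma \ref{cosum1} gives $X_u(W)(g)=W(gu)=\epsilon W(g)$, hence $W(gu^{-j})=\epsilon^{-j}W(g)$ for every $j$. The key computation is to multiply out the monomial matrix $\varpi^{\bar{k}}u^{-j}$. Writing $u^{-1}$ as the shift sending $e_i\mapsto e_{i+1}$ for $i<n$ and $e_n\mapsto\varpi^{-1}e_1$, a direct row-by-row bookkeeping gives
\[
\varpi^{\bar{k}}u^{-j}=\varpi^{\bar{k}_{(j)}}\,w_{-j},\qquad \bar{k}_{(j)}=\bar{k}-(\underbrace{1,\dots,1}_{j},\underbrace{0,\dots,0}_{n-j}),
\]
where $w_{-j}\in\mathbf{W}$ is the cyclic permutation $i\mapsto i+j\ (\mathrm{mod}\ n)$, whose only inversions are the pairs straddling position $n-j$, so $\ell(w_{-j})=j(n-j)$. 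Given a non-central dominant $\bar{k}$ with $k_n=0$ there is an index $j\in\{1,\dots,n-1\}$ with a strict descent $k_j>k_{j+1}$; for this $j$ the weight $\bar{k}_{(j)}$ is again dominant, still has last coordinate $0$, and has strictly smaller sum. Combining the $u^{-j}$ relation with Lemma \ref{permlem} (legitimate precisely because $\bar{k}_{(j)}\in T^+$) yields
\[
W(\varpi^{\bar{k}})=\epsilon^{j}\,(-q)^{-j(n-j)}\,W(\varpi^{\bar{k}_{(j)}}).
\]
Substituting the inductive formula and using $\delta_B(\varpi^{(1^j,0^{n-j})})=q^{-j(n-j)}$, the $q$-powers cancel against the $\delta_B$-ratio, the $\epsilon$-powers recombine to $\epsilon^{\sum k_i}$, and the residual sign is $(-1)^{j(2n-j-1)}=1$ because $j(2n-j-1)=2nj-j(j+1)$ is always even; this reproduces the claimed value and closes the induction.

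The main obstacle, and really the only conceptual point, is that the obvious descent $\bar{k}\mapsto\bar{k}-(1^j,0^{n-j})$ produced by $u^{-j}$ does \emph{not} in general land back in $T^+$: dominance can fail at the junction between coordinates $j$ and $j+1$. The resolution is to tie the power $j$ of $u$ to a strict descent of $\bar{k}$, which is exactly what makes Lemma \ref{permlem} applicable; everything else is the routine but sign-sensitive verification that the powers of $q$, of $-1$, and of $\epsilon$ balance, for which the two identities $\ell(w_{-j})=j(n-j)$ and $\delta_B(\varpi^{(1^j,0^{n-j})})=q^{-j(n-j)}$ are the load-bearing facts.
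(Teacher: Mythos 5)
Your proof is correct, and it runs on the same two engines as the paper's own argument --- the eigenrelation $W(gu)=\epsilon W(g)$ (legitimate since $u$ normalizes $J$, so $JuJ=uJ$) and Lemma \ref{permlem} --- but it organizes the induction in the opposite direction, and this is a genuine difference in route. The paper \emph{ascends}: plugging $g=ds_i\cdots s_{n-1}$ into the chain of identities $s_i\cdots s_{n-1}u=\varpi^{(0,\dots,1,\dots,0)}s_{i-1}\cdots s_1$ and applying Lemma \ref{permlem} to both sides, it raises $k_2$, then $k_3$, etc., one unit at a time until all coordinates equal $k_1$, terminating at the central element $\varpi^{(k_1,\dots,k_1)}$; the dominance needed for Lemma \ref{permlem} is automatic there because each coordinate is only pushed up until it meets $k_1$, and the intermediate quantities live at shifted points $ds_i\cdots s_{n-1}$ rather than on the torus. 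You instead \emph{descend}: the monomial identity $\varpi^{\bar{k}}u^{-j}=\varpi^{\bar{k}-(1^j,0^{n-j})}w_{-j}$ with $\ell(w_{-j})=j(n-j)$ strictly lowers $\sum_i k_i$, and your insistence that $j$ be chosen at a strict descent $k_j>k_{j+1}$ --- precisely so that $\bar{k}_{(j)}$ stays in $T^+$ and Lemma \ref{permlem} applies --- is the exact counterpart of the dominance bookkeeping that the paper's ascending scheme gets for free; you correctly identify this as the one nontrivial point, and your arithmetic checks out ($\delta_B(\varpi^{(1^j,0^{n-j})})=q^{-j(n-j)}$ cancels against $\ell(w_{-j})$, and $j(2n-j-1)=2nj-j(j+1)$ is indeed always even). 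What each route buys: yours gives a one-step recursion entirely at diagonal arguments terminating at $W(1)=1$, and handles the vanishing case cleanly via Lemma \ref{supp} at $w=1$ (which the paper leaves implicit); the paper's version needs no descent-selection argument and produces the product formula $\prod_i\epsilon^{k_i-k_1}(-q)^{(n+1-2i)(k_1-k_i)}$ in closed form before simplification. One small point you should make explicit: $\epsilon^n=1$ is not among the stated hypotheses of Theorem \ref{diagon}, but it follows from them, since $u^n=z(\varpi)$ is central and $W(zg)=W(g)$ forces $\epsilon^n W=W$ with $W(1)=1$; with that sentence added, your reduction to representatives with $k_n=0$ is fully justified.
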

 \begin{proof}
We note again that $$u=\varpi^{(0,0,...,0,1)}s_{n-1}s_{n-2}...s_1$$ We also have that $s_i^2=1$ and : $$s_i\varpi^{\underset{i+1\text{-th place}}{(0,0,\dots,1,\dots,0)}}s_i=\varpi^{\underset{i\text{-th place}}{(0,0,\dots,1,\dots,0)}},i\in\{1,...,n-1\}.$$ 
This gives us the following $n$ relations:
$$u=\varpi^{(0,0,\dots,0,1)}s_{n-1}s_{n-2}...s_1$$
$$s_{n-1}u=\varpi^{(0,...,1,0)}s_{n-2}...s_1$$
$$\vdots$$
$$s_1s_2...s_{n-1}u=\varpi^{(1,0,...,0,0)}$$
Since $X_{u}(W)(g)=W(gu)$ it follows that $W(gu)=\epsilon W(g)$ for all $g \in G$. Using this relation and setting each time $g=ds_i...s_{n-1}$ with $d=d(\varpi^{k_1},\dots,\varpi^{k_n})$, where $d\in T^+$ i.e. $ k_1\geq k_2\geq ...\geq k_{n-1}\geq k_n$ we obtain the relations (*):
$$\epsilon W(d)=W(d\varpi^{(0,0,\dots,0,1)}s_{n-1}s_{n-2}...s_1)$$
$$\epsilon W(ds_{n-1})=W(d\varpi^{(0,...,0,1,0)}s_{n-2}...s_1)$$
$$\vdots$$
$$\epsilon W(ds_2...s_{n-1})=W(d\varpi^{(0,1,...,0,0)}s_1)$$
Using Lemma \ref{permlem} on both sides of the last relation and noting that $\ell(s_j...s_1)=\ell(s_1...s_j)=j$ we see that:
$$W(\varpi^{(k_1,k_2,...,k_{n}}))=\epsilon(-q)^{(n-3)}W(\varpi^{(k_1,k_2+1,...,k_n})).$$ and iterating we get:
$$W(\varpi^{(k_1,k_2,k_3,...,k_n}))=\epsilon^{k_{2}-k_1}(-q)^{(n-3)(k_1-k_{2})}W(\varpi^{(k_1,k_1,k_3,...,k_n})).$$ 
Next we use the second to last relation and similarly get:
$$W(\varpi^{(k_1,k_1,k_3,k_4,...,k_{n}}))=\epsilon^{k_{3}-k_1}(-q)^{(n-5)(k_1-k_{3})}W(\varpi^{(k_1,k_1,k_1,k_4,...,k_{n}})).$$
Continuing in this manner with the rest of (*) we finally obtain: $$W(d)=\prod\limits_{i=1}^n\epsilon^{k_i-k_1}(-q)^{(n+1-2i)(k_1-k_{i})}W(\varpi^{(k_1,k_1,k_1,\dots,k_1)})=\prod\limits_{i=1}^n\epsilon^{k_i-k_1}(-q)^{(n+1-2i)(k_1-k_{i})}.$$ Where the last equality holds because $\varpi^{(k_1,....k_1)}$ is in $\mathcal{Z}$. The result now follows by observing that $\sum\limits_{i=1}^n(2i-1-n)k_1=0$ and that $q^{-\sum\limits_{i=1}^n(1+n-2i)k_i}=\delta_B(d)$.
 \end{proof}
\begin{lemma} \label{diagcos}
  For any Weyl element $w$: $Jd_wwJ=\underset{\bar{s}=(s_{\alpha})}{\bigcup}(\prod\limits_{\alpha\in \Phi^-}x_{\alpha}(\varpi s_{\alpha}))d_wwJ$, where each $s_{\alpha}$ ranges over a set of representatives of $\mathcal{O}/\mathfrak{p}^{n_{\alpha}}$ in $\mathcal{O}$ for some $n_{\alpha}\in\mathbbm{N}\cup\{0\}$ and each $\alpha\in\Phi^-$. 
\end{lemma}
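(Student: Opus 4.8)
Write $g=d_ww$ and $d_w=\varpi^{\bar k}$. The plan is to read off the left cosets of $JgJ$ from the general bijection $JgJ/J\cong J/(J\cap gJg^{-1})$, under which $jgJ$ corresponds to $j(J\cap gJg^{-1})$: the distinct left cosets in $JgJ$ are exactly the $\gamma gJ$ as $\gamma$ runs over representatives of $J/(J\cap gJg^{-1})$. So the task reduces to identifying $J\cap gJg^{-1}$. I will first show that the representatives $\gamma$ can all be taken inside $N^-_{\mathfrak{p}}$, and then compute $N^-_{\mathfrak{p}}\cap gJg^{-1}$ one root subgroup at a time, reading off the exponents $n_\alpha$.

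For the first step I will prove $N_{\mathcal{O}}T^0\subseteq gJg^{-1}$. Using $d_w^{-1}x_\alpha(c)d_w=x_\alpha(\alpha(d_w^{-1})c)$ together with $w^{-1}x_\alpha(t)w=x_{w^{-1}\alpha}(t)$, for $\alpha\in\Phi^+$ and $c\in\mathcal{O}$ one computes $g^{-1}x_\alpha(c)g=x_{w^{-1}\alpha}(\varpi^{-\langle\alpha,\bar k\rangle}c)$, since $\alpha(d_w^{-1})=\varpi^{-\langle\alpha,\bar k\rangle}$. Writing $\alpha$ as a sum of simple roots and invoking the defining property of $d_w$, each simple component contributes $0$ or $-1$ to $\langle\alpha,\bar k\rangle$, so $\langle\alpha,\bar k\rangle\leq 0$ for all $\alpha\in\Phi^+$; moreover, if $w^{-1}\alpha\in\Phi^-$ then at least one simple component must be sent to a negative root (otherwise $w^{-1}\alpha$ would be a positive sum of positive roots, hence positive), whence $\langle\alpha,\bar k\rangle\leq -1$. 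In the case $w^{-1}\alpha\in\Phi^+$ this gives $x_{w^{-1}\alpha}(\varpi^{-\langle\alpha,\bar k\rangle}c)\in N_{\mathcal{O}}\subseteq J$, while in the case $w^{-1}\alpha\in\Phi^-$ we get $\varpi^{-\langle\alpha,\bar k\rangle}c\in\mathfrak{p}$ with $w^{-1}\alpha$ negative, so again the element lies in $J$. Thus $x_\alpha(\mathcal{O})\subseteq J\cap gJg^{-1}$ for every $\alpha\in\Phi^+$, and $T^0\subseteq J\cap gJg^{-1}$ is immediate. Applying the Iwahori factorization $j=n^- t n^+\in N^-_{\mathfrak{p}}T^0N_{\mathcal{O}}$ and absorbing $tn^+\in N_{\mathcal{O}}T^0\subseteq gJg^{-1}$ then shows $jgJ=n^-gJ$, so every left coset equals $n^-gJ$ for some $n^-\in N^-_{\mathfrak{p}}$.

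It remains to decompose $N^-_{\mathfrak{p}}/(N^-_{\mathfrak{p}}\cap gJg^{-1})$, since $n^-gJ=(n')^-gJ$ precisely when $((n')^-)^{-1}n^-\in gJg^{-1}$. For $\beta\in\Phi^-$ the same computation gives $g^{-1}x_\beta(c)g=x_{w^{-1}\beta}(\varpi^{-\langle\beta,\bar k\rangle}c)$ with $\langle\beta,\bar k\rangle\geq 0$. Imposing membership in $J$ shows that the intersection $N^-_{\mathfrak{p}}\cap gJg^{-1}$ at the root $\beta$ is $x_\beta(\mathfrak{p}^{\max(1,\langle\beta,\bar k\rangle)})$ when $w^{-1}\beta\in\Phi^+$ and $x_\beta(\mathfrak{p}^{1+\langle\beta,\bar k\rangle})$ when $w^{-1}\beta\in\Phi^-$. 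Defining $n_\beta$ so that $1+n_\beta$ equals this exponent, one has $n_\beta\geq 0$ in both cases, and the transversal of $x_\beta(\mathfrak{p})$ over $x_\beta(\mathfrak{p}^{1+n_\beta})$ is $\{x_\beta(\varpi s_\beta):s_\beta\in\mathcal{O}/\mathfrak{p}^{n_\beta}\}$, which is exactly the parametrization asserted in the statement.

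The point needing the most care is the passage from this root-by-root analysis to the single ordered product $\prod_{\alpha\in\Phi^-}x_\alpha(\varpi s_\alpha)$: because $N^-_{\mathfrak{p}}$ is non-abelian, one must check that both $N^-_{\mathfrak{p}}$ and $N^-_{\mathfrak{p}}\cap gJg^{-1}$ factor as ordered products of root subgroups for a fixed ordering of $\Phi^-$ refining height, and that a transversal of the quotient may be taken as the ordered product of the individual root transversals found above. This holds because $gJg^{-1}$ is a conjugate of the Iwahori subgroup and therefore inherits a root-group factorization compatible with $N^-$, which is preserved upon intersecting with $N^-_{\mathfrak{p}}$. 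Assembling these factors yields the claimed decomposition of $Jd_wwJ$.
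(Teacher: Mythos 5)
Your proof is correct and follows essentially the same route as the paper's: both reduce the problem to showing that $T^0N_{\mathcal{O}}$ is conjugated into $J$ by $(d_ww)^{-1}$, using the sign of $w^{-1}\beta$ together with the valuation of $\beta(d_w^{-1})$ forced by the defining conditions of $d_w$ (your decomposition into simple roots makes explicit the paper's ``taking cases'' step), so that left-coset representatives can be drawn from $N^-_{\mathfrak{p}}$. You in fact go further than the paper, which leaves the exponents $n_\alpha$ unspecified and the ordered-product transversal implicit: your root-by-root computation of $N^-_{\mathfrak{p}}\cap gJg^{-1}$ and the directly-spanned factorization argument are sound and pin down $n_\alpha$ explicitly.
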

\begin{proof}
    We write $J=N^-_{\mathfrak{p}}T^0N_{\mathcal{O}}$ as before. For any $n\in N_{\mathcal{O}}$ observe that $n=\prod\limits_{i=1}^rx_{a_i}(s_{a_i})$ for some $r\in\mathbb{N}$ with $a_i\in\Delta^+$ and $s_{a_i}\in \mathcal{O}$ for all indices. Since $w^{-1}d_w^{-1}nd_ww=\prod\limits_{i=1}^r(w^{-1}d_w^{-1}x_{a_i}(s_{a_i})d_ww)$ it suffices to show that $x_{w^{-1}(\beta)}{(\beta(d_w^{-1})s_{\beta})}\in J, \forall \beta\in\Delta^+$ and any $s_{\beta}\in\mathcal{O}$.  However, taking cases, we see that if $\beta\in\Delta^+:w^{-1}\beta\in\Phi^+$ then $\beta(d_w^{-1})\in\mathcal{O}$ and similarly if $w^{-1}\beta\in\Phi^-$ then $\beta(d_w^{-1})\in\mathfrak{p}$. Thus in every case
    $w^{-1}d_w^{-1}x_{\beta}(s_{\beta})d_ww=x_{w^{-1}(\beta)}{(\beta(d_w^{-1})s_{\beta})}\in J$.
\end{proof}
\begin{lemma}\label{aux}
    For every Weyl element $w\in\mathbf{W}$ we have $d_{w_0w}=w_0d_ww_0d_{w_0}z_w$ for some $z_w\in\mathcal{Z}$.
\end{lemma}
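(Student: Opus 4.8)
The plan is to reduce the claim to a purely combinatorial identity among exponent vectors and then verify it by comparing the two sides one simple root at a time. Write every diagonal matrix as $\varpi^{\bar k}$. Since both $d_{w_0 w}$ and $w_0 d_w w_0 d_{w_0}$ are diagonal (the $w_0$-conjugate of a diagonal matrix is again diagonal, with its entries reversed), the asserted equality modulo $\mathcal{Z}$ is equivalent to saying that the two exponent vectors differ by a constant vector $(c,\dots,c)$, and this in turn is equivalent to the equality of all $n-1$ consecutive differences $\langle\alpha_{i,i+1},\cdot\rangle$, $i=1,\dots,n-1$. The constant $c$ then pins down $z_w=z(\varpi^{c})$, but its value is never needed. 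The first thing I would record is that, by the definition of $d_w$ together with the formula $\langle\alpha_{i,j},\bar k\rangle=k_i-k_j$, the exponent vector $\bar k(w)$ of $d_w$ satisfies $\langle\alpha_{i,i+1},\bar k(w)\rangle=0$ when $w^{-1}(i)<w^{-1}(i+1)$ and $=-1$ when $w^{-1}(i)>w^{-1}(i+1)$; thus the consecutive differences simply record the descents of $w^{-1}$.

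Next I would compute the right-hand side difference by difference. Conjugation by $w_0$ reverses the exponent vector, so $w_0 d_w w_0=\varpi^{\bar m}$ with $m_i=k_{n+1-i}(w)$, while $d_{w_0}=\varpi^{\bar\ell}$ with $\ell_i=i-n$ (all of whose consecutive differences equal $-1$, since $w_0^{-1}=w_0$ has a descent at every position). Hence the $i$-th consecutive difference of $w_0 d_w w_0 d_{w_0}$ is $(m_i-m_{i+1})+(\ell_i-\ell_{i+1})=\bigl(k_{n+1-i}(w)-k_{n-i}(w)\bigr)-1=-\langle\alpha_{n-i,n+1-i},\bar k(w)\rangle-1$. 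By the description of $\bar k(w)$ above this equals $-1$ when $w^{-1}$ has an ascent at position $n-i$ and equals $0$ when $w^{-1}$ has a descent at $n-i$.

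Finally I would compute the left-hand side and match. Since $(w_0 w)^{-1}=w^{-1}w_0$ sends $i\mapsto w^{-1}(n+1-i)$, the element $(w_0w)^{-1}$ has an ascent at $i$ precisely when $w^{-1}$ has a descent at $n-i$; feeding this into the descent description of $\bar k(w_0 w)$ shows that the $i$-th consecutive difference of $d_{w_0 w}$ is $0$ when $w^{-1}$ has a descent at $n-i$ and $-1$ otherwise. This is exactly the pattern found for the right-hand side, so all consecutive differences agree, the two exponent vectors differ by a constant, and the claim follows. The only real work is the index bookkeeping in the three-way translation between descents of $w^{-1}$, descents of $(w_0 w)^{-1}$, and the reversal induced by $w_0$-conjugation; once the positions $i\leftrightarrow n-i$ are aligned correctly the two descent patterns coincide termwise with no residual computation.
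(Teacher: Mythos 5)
Your proof is correct and takes essentially the same approach as the paper: both arguments rest on the observation that $(w_0w)^{-1}(i)=w^{-1}(n+1-i)$, so descents of $(w_0w)^{-1}$ at position $i$ correspond to ascents of $w^{-1}$ at position $n-i$, and then compare exponent vectors modulo a constant shift. The only cosmetic difference is that you match the $n-1$ consecutive differences termwise, while the paper writes the exponents of $d_{w_0w}$ in closed form as $g'_i=n-i-(g_1-g_{n-i+1})$ and thereby identifies $z_w=z(\varpi^{g_1})$ explicitly.
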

\begin{proof}
   We observe that $w^{-1}(i)>w^{-1}(i+1)\iff w^{-1}w_0(n+1-i)>w^{-1}w_0(n+1-(i+1))$ thus we have that if $d_w=(\varpi^{-g_1},...,\varpi^{-g_n})$ then the exponents $g'_i$ of $d_{w_0w}$ which are the number of descents of $(w_0w)^{-1}$  occuring after place $i$ are given by $g'_i=n-i-(g_1-g_{n-i+1})$. This imples $d_{w_0w}=(\varpi^{-(n-1)-g_n+g_1},...,\varpi^{-(n-i)-g_{n-i+1}+g_1},...)$ which is easily seen to be $w_0d_ww_0d_{w_0}z(\varpi^{g_1})$. 
\end{proof}
\begin{theorem}[Whittaker values at every cell $Tw$] \label{main} ~\\
Let as before $d=\varpi^{\bar{k}}$, then under the assumptions of Theorem \ref{diagon}
    $$W(dw)=\begin{cases}
      \epsilon^{\sum\limits_{i=1}^{n}k_i}(-1)^{(n-1)\sum\limits_{i=1}^{n}k_i}\delta_B(d)(-q)^{-\ell(w)}  & \text{if $\bar{k}$ is $w$-dominant} \\
        0 & \text{otherwise} 
    \end{cases}$$
\end{theorem}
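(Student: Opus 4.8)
The plan is to reduce the general case $W(dw)$ to the diagonal case handled by Theorem \ref{diagon}, by relating the value on the cell $Tw$ to the value on $T$ (i.e.\ the cell $T\cdot 1$). The key observation is that Theorem \ref{diagon} already gives a clean formula for the diagonal values $W(d)$ whenever $d\in T^+$, and Lemma \ref{permlem} gives $W(dw)=(-q)^{-\ell(w)}W(d)$ for $d\in T^+$. So on the subset $d\in T^+$ the claimed formula is immediate. The real content is extending this to all $w$-dominant $\bar k$, since $T_w^+$ is strictly larger than $T^+$ in general (we have $T_w^+=d_wT^+$ by the remark following the definition of $d_w$). **First I would** write an arbitrary $w$-dominant weight as $d=d_w d'$ with $d'\in T^+$, using the factorization $T_w^+=d_wT^+$, and then try to compute $W(d_w d' w)$ in terms of the known diagonal values.

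**The main step** will be to establish a transformation law of the shape $W(d_w d' w)=c\,(-q)^{-\ell(w)}W(d_w d')$ or, better, to relate $W(d_w w)$ directly back to a diagonal value. Here Lemma \ref{diagcos} is the essential tool: it gives the coset decomposition $Jd_wwJ=\bigcup_{\bar s}(\prod_{\alpha\in\Phi^-}x_\alpha(\varpi s_\alpha))d_wwJ$, where the number of nontrivial representatives is governed by the $n_\alpha$. Applying the Hecke operator $X_{d_w w}$ (or an appropriate product of the generators) to $W$ and evaluating via Lemma \ref{cosum1}, the left side produces the Hecke eigenvalue times $W$ evaluated at the identity or at a diagonal point, while the right side is a sum of translates $W\bigl((\prod_\alpha x_\alpha(\varpi s_\alpha))d_w w\bigr)$. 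The lower-triangular unipotent factors $x_\alpha(\varpi s_\alpha)$ with $\alpha\in\Phi^-$ lie in $J$, so after commuting them appropriately and using $J$-invariance on the right, only the diagonal contribution survives. This is where the precise bookkeeping of $\delta_B(d_w)$ and the power of $q$ from counting cosets must be matched against $(-q)^{-\ell(w)}$.

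**Alternatively, and perhaps more cleanly,** I would use the Hecke algebra relations directly: since $dw$ for $w$-dominant $\bar k$ corresponds to an element whose Hecke operator factors as a product of generators $X_{s_i}$ and $X_u$ with total length accounting, I would express $X_{dw}$ in terms of the generators, apply the character values $\rho_\chi(X_{s_i})=-1$ and $\rho_\chi(X_u)=(-1)^{n-1}\epsilon$ from Proposition \ref{char}, and read off $W(dw)$ from the eigenvalue. The factor $(-q)^{-\ell(w)}$ should emerge from the $\ell(w)$ simple reflections each contributing $-1$ together with the $q^{\ell(w)}$ coset count, exactly as in the proof of Lemma \ref{permlem}; the factor $\epsilon^{\sum k_i}(-1)^{(n-1)\sum k_i}\delta_B(d)$ should come from the diagonal part via Theorem \ref{diagon} and the $X_u$-eigenvalue applied $\sum k_i$ times.

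**The hard part will be** handling the enlarged support $T_w^+\setminus T^+$ correctly: for weights $\bar k$ that are $w$-dominant but not dominant, Lemma \ref{permlem} does not apply directly (it requires $d\in T^+$), so I cannot simply pull $w$ across. I expect the resolution to require the auxiliary symmetry of Lemma \ref{aux}, $d_{w_0w}=w_0 d_w w_0 d_{w_0} z_w$, to transport the problem to a configuration where the diagonal formula and Lemma \ref{permlem} are simultaneously applicable, and then to verify that the central element $z_w$ and the $w_0$-conjugations produce precisely the factor $(-q)^{-\ell(w)}$ with no extra $q$-powers. Checking that the counting in Lemma \ref{diagcos} (the exponents $n_\alpha$) sums to exactly $\ell(w)$ and that the surviving diagonal value is the one predicted by Theorem \ref{diagon} is the delicate computation I would carry out with care, as this is where an off-by-$\delta_B$ or an off-by-$q^{\ell(w)}$ error is most likely to creep in.
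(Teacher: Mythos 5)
Your outline reproduces the paper's strategy at the level of ingredients---the scalar action of $X_{d_ww}$ on $W$, the coset decomposition of Lemma \ref{diagcos}, Lemma \ref{aux}, and reduction to the diagonal values of Theorem \ref{diagon}---but the step on which the whole argument turns is wrong as you state it. You claim that in the sum $\sum_{\bar s}W\bigl(g\prod_{\alpha\in\Phi^-}x_\alpha(\varpi s_\alpha)\,d_ww\bigr)$ the unipotent factors can be removed ``after commuting them appropriately and using $J$-invariance on the right.'' They cannot: these factors sit between $g$ and $d_ww$, and they represent pairwise distinct left cosets of $J$ by construction, so no commutation can push them into $J$ on the right of $d_ww$---if that were possible, $Jd_wwJ$ would be a single left coset. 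The idea your sketch is missing is the choice of evaluation point: the paper sets $g=dd_{w_0}w_0\ (=dw_0d_{w_0}^{-1})$ with $d\in T^+$, so that conjugating the lower-triangular factors across $w_0d_{w_0}^{-1}$ turns them into elements of $N$ (negative roots become positive under $w_0$) on which $\psi$ evaluates trivially, the scaling by $\alpha(d_{w_0}^{-1})$ taking care of the simple-root coordinates and the non-simple positive-root coordinates being invisible to $\psi$ for any further conjugation by $d\in T^+$. Only then does left $\psi$-equivariance make every coset contribute the same value, yielding $\lambda_wW(dd_{w_0}w_0)=\mu(Jd_wwJ)\,W(dd_{w_0w}w_0w)$ via Lemma \ref{aux}. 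Evaluating at $g=1$ or at a diagonal point, as both of your routes suggest, leaves a sum twisted by lower-triangular unipotents that neither $\psi$-equivariance nor right $J$-invariance controls; this is also why your ``alternative'' route through the character values of Proposition \ref{char} alone does not close.

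Your plan for the ``delicate computation'' is likewise misdirected: you propose to verify that the exponents $n_\alpha$ in Lemma \ref{diagcos} sum to exactly $\ell(w)$, but they do not (the coset count reflects the affine length of $d_ww$, not $\ell(w)$), and the paper never computes it. Instead the unknown volume $\mu(Jd_wwJ)$ cancels because the eigenvalue relation is used twice: once at a special point, where Lemma \ref{permlem} and Theorem \ref{diagon} apply legitimately (note $d_{w_0}^{-1}\in T^+$ even though $d_{w_0}\notin T^+$), to solve for $\lambda_w$ in terms of $\mu$ and a known value, and once for general $d\in T$, whereupon $\mu$ drops out. The bootstrap order also matters and is absent from your sketch: one first takes $w=w_0$, where $d_{w_0w_0}=1$ puts the right-hand side on the diagonal, to obtain $W(dw_0)=(-q)^{-\ell(w_0)}\delta_B(d_{w_0})W(dd_{w_0}^{-1})$ for \emph{all} $d\in T$; only then does the substitution $w\to w_0w$, combined with Lemma \ref{aux}, land general cells on the now-known $w_0$-cell and give $W(dw)=(-q)^{-\ell(w)}\delta_B(d_w)W(dd_w^{-1})$. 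The factor $(-q)^{-\ell(w)}$ thus emerges from ratios of known Whittaker values, not from a count of coset representatives. Your instinct that Lemma \ref{aux} must ``transport the problem'' is correct, but without the $w_0$-shifted evaluation point and the double use of the eigenvalue relation, the plan as written would stall at the coset sum.
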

\begin{proof}
Since every element of the Iwahori Hecke algerba acts as scalar multiplication, $X_{d_ww}(W)(g)=\lambda_{w}W(g)$ for some $\lambda_w\in\mathbbm{C}$. By Lemma \ref{diagcos} this becomes
$\lambda_{w}W(g)=\sum\limits_{\bar{s}}W(g\prod\limits_{\alpha\in \Phi^-}x_{\alpha}(\varpi s_{\alpha})d_ww)$. We set $g=dd_{w_0}w_0(=dw_0d_{w_0}^{-1})$. To do that we first show that $w_0d_{w_0}^{-1}\prod\limits_{\alpha\in \Delta^-}x_{\alpha}(\varpi s_{\alpha})d_{w_0}w_0=\prod\limits_{\alpha\in \Delta^-}x_{w_0(\alpha)}(\varpi s_{\alpha}\alpha(d_{w_0}^{-1}))\in N_{\mathcal{O}}$ since $|\alpha(d_{w_0}^{-1})|=q^{-1}$ and $\psi(dx_{w_0(\alpha)}(\varpi s_{\alpha}\alpha(d_{w_0}^{-1}))d^{-1})=1$ for any $\alpha\in\Phi^-\backslash\Delta^-$ and $d\in T$. Thus we can conjugate $\prod\limits_{\alpha\in \Phi^-}x_{w_0(\alpha)}(\varpi s_{\alpha}\alpha(d_{w_0}^{-1}))$ with any $d\in T^+$. We now get for any $d\in T^+$:  $\lambda_{w}W(dd_{w_0}w_0)=\mu(Jd_wwJ)W(dd_{w_0}w_0d_ww_0w_0w)$.  We already know the values of $W(dw_0)$ when $d\in T^+$ from Lemma \ref{permlem} (but not in $T_{w_0}^+$ which is bigger). We use these values to compute the value of $\lambda_{w}$. From Lemma \ref{aux} the above relation becomes:
$$\lambda_{w}W(dd_{w_0}w_0)=\mu(Jd_wwJ)W(dd_{w_0w}w_0w),\forall d\in T.$$
Setting $w=w_0$ and comparing scalar terms with Lemma \ref{permlem} for $d=d_{w_0}^{-1}$ we get that $\lambda_{w_0}=(-q)^{\ell(w_0)}\mu(Jd_{w_0}w_0J)W(d_{w_0}^{-1})$. Substituting the value of $\lambda_{w_0}$ and setting $d\to dd_{w_0}^{-1}$ leads to: 
$$W(dw_0)=(-q)^{-\ell(w_0)}W(d_{w_0}^{-1})^{-1}W(dd_{w_0}^{-1})=(-q)^{-\ell(w_0)}\delta_B(d_{w_0})W(dd_{w_0}^{-1}), \forall d\in T$$ Similarly we set $w\to w_0w$ and compute $\lambda_{w_0w}=(-q)^{\ell(w_0)-\ell(w)}\mu(Jd_{ww_0}ww_0J)W(d_{w_0}^{-1}d_w)$. Substituting this value of $\lambda_{w_0w}$ and using the above relation for the $W(dw_0)$ term implies:
$$W(dw)=(-q)^{-\ell(w)}W(d_{w}^{-1})^{-1}W(dd_{w}^{-1})=(-q)^{-\ell(w)}\delta_B(d_w)W(dd_{w}^{-1}), \forall d\in T \; \; (**)$$ and the value $W(dw)$ for any $w\in\mathbf{W}$ and $d\in T$ follows from the diagonal values of Theorem \ref{diagon}.
\end{proof}

\begin{corollary}
    If $W\in\mathcal{W}(\pi_{St^{\chi}},\psi)^J$ and $W(1)=0$ then the relations (*) imply that $W(d)=0,\forall d\in T^+ \iff W(d)= 0, \forall d\in T$. Relation (**) then implies that $W(dw)\equiv 0$ in every cell meaning that $W\equiv 0$. This strengthens a more general non-vanishing result in \cite{reeder}.
\end{corollary}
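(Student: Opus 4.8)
The plan is to propagate the single vanishing $W(1)=0$ outward through the structural relations established above, in three stages: from the dominant diagonal $T^+$, to all of $T$, and then to every Bruhat--Iwahori cell. First I would revisit the computation in the proof of Theorem \ref{diagon}. There the relations (*), coming purely from the eigenvalue identity $X_u(W)(g)=\epsilon W(g)$ together with Lemma \ref{permlem}, were used to write, for any $d=\varpi^{\bar k}\in T^+$,
$$W(d)=\Big(\prod_{i=1}^{n}\epsilon^{k_i-k_1}(-q)^{(n+1-2i)(k_1-k_i)}\Big)\,W(\varpi^{(k_1,\dots,k_1)}).$$
The normalization $W(1)=1$ was never used to derive this identity; it entered only at the final step, to evaluate the central term. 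Under the central-invariance hypothesis $W(zg)=W(g)$ we still have $W(\varpi^{(k_1,\dots,k_1)})=W(1)$, so each diagonal value with $d\in T^+$ is an explicit scalar multiple of $W(1)$. Hence $W(1)=0$ forces $W(d)=0$ for every $d\in T^+$.

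Next I would extend this to all of $T$. Applying Lemma \ref{supp} with $w=1$ shows that $W(d)=0$ whenever $\bar k$ fails to satisfy $k_i\ge k_{i+1}$ for all $i$, i.e.\ whenever $d\notin T^+T^0$; and since $W$ is right $J$-invariant with $T^0\subset J$, the remaining values on $T^+T^0$ are already handled by the previous step. This is exactly the asserted equivalence: the relations (*) yield $W|_{T^+}\equiv 0$, which in the presence of the support Lemma is the same as $W|_{T}\equiv 0$ (the reverse implication being trivial since $T^+\subset T$).

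Finally I would invoke relation (**) of Theorem \ref{main}, namely $W(dw)=(-q)^{-\ell(w)}\delta_B(d_w)\,W(dd_w^{-1})$, which that theorem establishes for every $w\in\mathbf{W}$ and \emph{every} $d\in T$. Since $dd_w^{-1}\in T$ and $W$ now vanishes on all of $T$, each cell value $W(dw)$ is zero. To pass from the cells to all of $G$, I would use the Bruhat--Iwahori decomposition $G=\bigcup_{d\in T,\,w\in\mathbf{W}}NdwJ$ of Lemma \ref{decomp}: any $g\in G$ has the form $g=n\,dw\,j$ with $n\in N$ and $j\in J$, so right $J$-invariance together with the Whittaker transformation law gives $W(g)=\psi(n)W(dw)=0$, whence $W\equiv 0$. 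The argument is essentially an assembly of the earlier results, so I do not anticipate a genuine obstacle; the only point demanding care is to confirm that relation (**) is genuinely available for all $d\in T$ and not merely for $d\in T^+$ --- which is precisely the generality in which Theorem \ref{main} proves it --- so that the diagonal vanishing really does spread to every cell.
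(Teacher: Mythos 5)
Your proposal is correct and takes essentially the same route as the paper's own inline argument for this corollary: relations (*) together with central invariance propagate $W(1)=0$ to all of $T^+$, Lemma \ref{supp} (with $w=1$) plus right $J$-invariance upgrades this to vanishing on all of $T$, and relation (**) of Theorem \ref{main}, valid for every $d\in T$, then kills each cell $Tw$, so that the Bruhat--Iwahori decomposition of Lemma \ref{decomp} and $\psi$-equivariance force $W\equiv 0$. Your only addition is to spell out the final cells-to-$G$ step, which the paper leaves implicit.
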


\section{Whittaker Functions and New-vectors}
From the relation $(F*W)(g)=\rho_{\chi}(F)W(g)$ and the uniqueness in Theorem \ref{main} we conclude that: 
\begin{theorem} \label{StWhit}
    The function described in Theorem \ref{main} $$W(dw)=\begin{cases}
     (\chi\delta_B)(d)(-q)^{-\ell(w)}  & \text{if $\bar{k}$ is $w$-dominant} \\
        0 & \text{otherwise} 
    \end{cases}$$ with $d=\varpi^{\bar{k}}$, is the unique Whittaker function $W_v$ that corresponds to the unique Iwahori spherical vector $v\in (St^{\chi})^J$.
\end{theorem}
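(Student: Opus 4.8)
The plan is to derive Theorem \ref{StWhit} as the specialization of Theorem \ref{main} to the genuine Steinberg data, together with an identification step. The three ingredients already available are: the Hecke eigenvalues of Proposition \ref{char}, the general formula of Theorem \ref{main} for an $\mathcal{H}(G,J)$-eigenfunction with prescribed $X_{s_i}$- and $X_u$-eigenvalues, and the one-dimensionality of $\mathcal{W}(\pi_{St^{\chi}},\psi)^J$ from Corollary \ref{stdim}. No new geometric input is needed; the statement is a synthesis of these results plus a sign simplification.

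First I would pin down existence and uniqueness of the object. By Corollary \ref{stdim} the space $(St^{\chi})^J$ is one-dimensional; I fix a generator $v$. Since $\mathcal{W}(\pi_{St^{\chi}},\psi)\cong St^{\chi}$ as $G$-modules, this isomorphism carries $v$ to a nonzero Iwahori-fixed Whittaker function $W_v$, unique up to scalar, which I normalize by $W_v(1)=1$ (the normalization is legitimate because the Corollary following Theorem \ref{main} shows $W_v(1)=0$ would force $W_v\equiv 0$). As $v$ is a $\rho_{\chi}$-eigenvector, $W_v$ satisfies $X_{s_i}(W_v)=-W_v$ and $X_u(W_v)=\rho_{\chi}(X_u)W_v$; moreover $W_v$ is $\mathcal{Z}$-invariant because $\chi=\tau\circ\det$ with $\tau^n=1$ gives $\chi|_{\mathcal{Z}}=1$ and hence a trivial central character. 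Thus $W_v$ meets the hypotheses of Theorem \ref{diagon} and Theorem \ref{main}, with the $X_u$-eigenvalue equal to $\rho_{\chi}(X_u)=(-1)^{n-1}\epsilon$ from Proposition \ref{char}.

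Next I would simplify the resulting expression. Substituting $(-1)^{n-1}\epsilon$ for the abstract $X_u$-eigenvalue in the formula of Theorem \ref{main}, the diagonal prefactor becomes
$$\bigl((-1)^{n-1}\epsilon\bigr)^{\sum_{i=1}^{n}k_i}(-1)^{(n-1)\sum_{i=1}^{n}k_i}\delta_B(d)=(-1)^{2(n-1)\sum_{i=1}^{n}k_i}\,\epsilon^{\sum_{i=1}^{n}k_i}\,\delta_B(d)=\epsilon^{\sum_{i=1}^{n}k_i}\delta_B(d).$$
Since $\chi(\varpi^{\bar{k}})=\epsilon^{\sum_{i=1}^{n}k_i}$, the right-hand side is exactly $\chi(d)\delta_B(d)=(\chi\delta_B)(d)$, so the two sign contributions cancel and Theorem \ref{main} reads $W_v(dw)=(\chi\delta_B)(d)(-q)^{-\ell(w)}$ on $w$-dominant $\bar{k}$ and $0$ otherwise, as claimed. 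Uniqueness of $W_v$ is then inherited from Corollary \ref{stdim}: any Iwahori-fixed Whittaker function agrees with $W_v$ up to scalar, and Theorem \ref{main} shows the $W_v(1)=1$ normalization determines it completely, so it coincides with the displayed function and is the Whittaker function attached to $v$.

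I expect the only delicate point to be the bookkeeping of the two roles played by $\epsilon$ — the $n$-th root of unity defining $\tau$ versus the $X_u$-eigenvalue $(-1)^{n-1}\epsilon$ of Proposition \ref{char} — and the verification that the sign $(-1)^{(n-1)\sum_i k_i}$ produced by Theorem \ref{main} is exactly cancelled by the $(-1)^{(n-1)\sum_i k_i}$ arising from raising $(-1)^{n-1}$ to the power $\sum_{i=1}^n k_i$; the evenness of $2(n-1)\sum_{i=1}^n k_i$ makes the cancellation immediate. The conceptual content is otherwise minimal, the theorem being the clean repackaging of Theorem \ref{main} and Proposition \ref{char} under the uniqueness furnished by Corollary \ref{stdim}.
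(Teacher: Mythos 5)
Your proposal is correct and follows essentially the same route as the paper, which deduces Theorem \ref{StWhit} in one line from the eigenvalue relation $(F*W)(g)=\rho_{\chi}(F)W(g)$, Proposition \ref{char}, and the uniqueness in Theorem \ref{main}. Your write-up merely makes explicit what the paper leaves implicit — the legitimacy of the normalization $W_v(1)=1$ via the non-vanishing corollary, the triviality of the central character from $\tau^n=1$, and the cancellation $\bigl((-1)^{n-1}\bigr)^{\sum k_i}(-1)^{(n-1)\sum k_i}=1$ turning the prefactor into $(\chi\delta_B)(d)$ — all of which is accurate.
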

We now use this explicit description of the Whittaker function associated to $v\in St^{\chi}$ to show that $v$ is not fixed by any other parahoric subgroup of $G$ for all generalized Steinberg representations. \\
 For $S=\{s_{i_1},...,s_{i_r}\}$ a subset of the simple permutations generating $\mathbf{W}$, let $\mathbf{W}_S$ be the subset of $\mathbf{W}$ generated by the elements of $S$.
\begin{theorem}
    For any parahoric $K_S=J\mathbf{W}_SJ$ with $S\ne\{1\}$ it holds that $(St^{\chi})^{K_S}=\{0\}$.
\end{theorem}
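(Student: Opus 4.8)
The plan is to leverage the closed formula of Theorem \ref{StWhit} to show that the unique Iwahori fixed vector fails to be invariant under any permutation matrix $s_i$, which immediately rules out invariance under any parahoric strictly larger than $J$.

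First I would reduce the statement to a single simple reflection. Since $J\subseteq K_S$, any $K_S$ fixed vector is in particular $J$ fixed, so $(St^\chi)^{K_S}\subseteq (St^\chi)^J$. By Corollary \ref{stdim} the right-hand space is one dimensional, spanned by the vector $v$ whose Whittaker function $W=W_v$ is computed in Theorem \ref{StWhit}. Hence $(St^\chi)^{K_S}\neq\{0\}$ would force $v$ itself to be fixed by all of $K_S$. Since $S\neq\{1\}$, the group $\mathbf{W}_S$ is nontrivial and contains some simple reflection $s_i$, whose permutation matrix lies in $K_S=J\mathbf{W}_SJ$. It therefore suffices to derive a contradiction from the assumption that $v$ is fixed by $s_i$.

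Next I would translate this into the right translation behaviour of $W$. Because the Whittaker model realizes $St^\chi$, invariance of $v$ under $s_i$ gives $W(gs_i)=W(g)$ for all $g\in G$. I then evaluate both sides at $g=d=\varpi^{\bar k}$ with $\bar k\in T^+$, i.e. $k_1\geq\cdots\geq k_n$. Such $\bar k$ is $1$-dominant, so Theorem \ref{StWhit} gives $W(d)=(\chi\delta_B)(d)$ using $\ell(1)=0$. The key point, already recorded in the paper as $T^+\subseteq T_{s_i}^+$, is that $\bar k$ is simultaneously $s_i$-dominant: at $w=s_i$ the only relaxed condition is along $\alpha_i$, where $k_i\geq k_{i+1}$ is weakened to $k_i\geq k_{i+1}-1$, while all the remaining inequalities $k_j\geq k_{j+1}$ are unchanged. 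Thus Theorem \ref{StWhit} also applies to $W(ds_i)$ and yields $W(ds_i)=(\chi\delta_B)(d)(-q)^{-\ell(s_i)}=(\chi\delta_B)(d)(-q)^{-1}$.

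The contradiction is then immediate: the invariance $W(ds_i)=W(d)$ forces $(\chi\delta_B)(d)(-q)^{-1}=(\chi\delta_B)(d)$, and since $\chi\delta_B$ is a character the value $(\chi\delta_B)(d)$ is nonzero, so $(-q)^{-1}=1$, which is impossible for $q\geq 2$. Hence no nonzero vector of $St^\chi$ is fixed by $s_i$, and consequently $(St^\chi)^{K_S}=\{0\}$ for every $S\neq\{1\}$. I expect the only delicate point to be the bookkeeping in the reduction, namely confirming that right translation by the permutation matrix $s_i$ is exactly the action fixing $v$ and that $s_i\in K_S$; once that is in place the Whittaker evaluation is a one line application of the closed formula, the genuine content being that the factor $(-q)^{-\ell(w)}$ strictly distinguishes the cells $Td$ and $Tds_i$.
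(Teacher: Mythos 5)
Your proposal is correct and follows essentially the paper's own route: reduce to a single simple reflection $s_i$ via the one-dimensionality of $(St^{\chi})^J$ from Corollary \ref{stdim}, deduce $W(gs_i)=W(g)$, and contradict the explicit formula of Theorem \ref{StWhit}. The only (harmless) difference is the test point: you evaluate at $d\in T^+$ (even $g=1$ suffices), where the factor $(-q)^{-\ell(s_i)}=(-q)^{-1}\neq 1$ gives the contradiction, whereas the paper evaluates at $g=d_{s_i}$, which is $s_i$-dominant but not in $T^+$, so that $W(d_{s_i})=0$ while $W(d_{s_i}s_i)\neq 0$.
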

\begin{proof}
    Assume $v\in (St^{\chi})^{K_S}$ is non-zero, then $v\in (St^{\chi})^J$ and thus $W_v$ is of the form we compute in Theorem \ref{main}. Then $v\in (St^{\chi})^{K_S} \implies W_v(gk_S)=W(g),\forall k_S\in K_S$ so picking $k_S=s_i$ for some $s_i\in S$ we see that $W(gs_i)=W(g),\forall g\in G$. Setting $g=d_{s_i}$ yields: $W(d_{s_i}s_i)=W(d_{s_i})=0$ since $d_{s_i}\not\in T^+$. This however contradicts the formula in Theorem \ref{main} according to which $W(d_{s_i}s_i)\ne 0$.
\end{proof}

\section{Whittaker Functions for $SL(n)$}
We will use basic notions from $l$-indistinguishability to describe the Iwahori spherical Whittaker functions of $SL_n(F)$ in the same way. Let $G'$ denote the product of $SL_n(F)$ and $\mathcal{Z}$ in $G$. The representations of $G'$ are trivially in bijection with the representations of $SL_n(F)$ up to twist by a fixed central character of $G'$. 
\begin{lemma}
    $St^{\chi}|_{SL_n(F)}$ remains irreducible as a representation of $SL_n(F)$.  
\end{lemma}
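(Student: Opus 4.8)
The plan is to invoke the standard Clifford-theoretic machinery of $L$-indistinguishability for the pair $SL_n(F)\subseteq GL_n(F)$, due to Gelbart and Knapp. First I would reduce to the intermediate group $G'=Z\cdot SL_n(F)$: since $Z$ acts on any irreducible $G'$-module through a scalar, the restriction $St^{\chi}|_{SL_n(F)}$ is irreducible if and only if $St^{\chi}|_{G'}$ is. The determinant identifies the quotient $G/G'$ with $F^*/(F^*)^n$, which is abelian and finite for a $p$-adic field, so Clifford theory applies in its sharpest form: for an irreducible admissible $\pi$ of $G$ the restriction $\pi|_{G'}$ is multiplicity-free, and it is irreducible precisely when the stabilizer group
$$ X(\pi)=\{\eta:F^*\to\mathbb{C}^{\times}\text{ trivial on }(F^*)^n\ :\ \pi\otimes(\eta\circ\det)\cong\pi\} $$
is trivial. (Note any $\eta\in X(\pi)$ satisfies $\eta^n=1$ by comparing central characters, so $X(\pi)$ is automatically finite.)

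The second step is to identify this stabilizer for $\pi=St^{\chi}$. Twisting by a character of the determinant commutes with normalized parabolic induction, so $I(\chi\delta_B^{-1/2})\otimes(\eta\circ\det)\cong I(\chi\eta\,\delta_B^{-1/2})$, and passing to the unique irreducible quotient gives
$$ St^{\chi}\otimes(\eta\circ\det)\cong St^{\chi\eta}. $$
Hence $\eta\in X(St^{\chi})$ if and only if $St^{\chi\eta}\cong St^{\chi}$.

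The final step is a rigidity statement: distinct twists yield non-isomorphic generalized Steinberg representations. Here I would use Proposition \ref{jacdim}, which computes the Jacquet module $(St^{\chi})_N\cong\chi\delta_B$ as a character of $T$. Since $\det$ is trivial on $N$, the Jacquet functor interacts with the twist by $(St^{\chi}\otimes\eta\circ\det)_N\cong(St^{\chi})_N\otimes(\eta\circ\det)|_T$, so an isomorphism $St^{\chi\eta}\cong St^{\chi}$ forces the equality of Jacquet modules $\chi\eta\delta_B=\chi\delta_B$ on $T$; evaluating at $d(t,1,\dots,1)$ gives $\eta=1$. Therefore $X(St^{\chi})$ is trivial and $St^{\chi}|_{SL_n(F)}$ is irreducible.

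The step I expect to require the most care is the first: making precise that, because $G/G'$ is abelian, the restriction $\pi|_{G'}$ is multiplicity-free with exactly $|X(\pi)|$ irreducible constituents. This is the content of the Gelbart--Knapp theory of $L$-indistinguishability for $SL_n$, so the burden is to cite it correctly and to check that $St^{\chi}$ is admissible (which it is, being irreducible admissible). By contrast, the twisting identity $St^{\chi}\otimes(\eta\circ\det)\cong St^{\chi\eta}$ and the Jacquet-module rigidity are then purely formal.
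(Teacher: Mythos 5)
Your proposal is correct and follows essentially the same route as the paper: both reduce to the Gelbart--Knapp criterion (their Lemma 2.1(d)) that irreducibility of the restriction to $G'=\mathcal{Z}\cdot SL_n(F)$ is equivalent to triviality of the group of determinant-twists fixing $St^{\chi}$, and both rule out a nontrivial twist $\eta\circ\det$ by comparing Jacquet modules via Proposition \ref{jacdim}, forcing $\chi\eta\delta_B=\chi\delta_B$ and hence $\eta\equiv 1$. The only cosmetic difference is that you compute the Jacquet module of the twist by formal compatibility of the Jacquet functor with characters trivial on $N$, while the paper invokes regularity of $\xi\chi\delta_B$ to apply the same proposition to the twisted Steinberg; the substance is identical.
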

\begin{proof}
    Let $\xi$ be a character of $G$ that is trivial on $G'$. Then $\xi=\xi'\circ\det$ where $\xi'$ is a character of $F^*$ and thus $\xi^w=\xi$. Notice that if $\xi\otimes St^{\chi}\simeq St^{\chi}$ then $(\xi\otimes St^{\chi})_{N}\simeq (St^{\chi})_{N}$. But $\xi\otimes I(\chi)\simeq I(\xi\chi)$ and $\xi\chi\delta_B$ is a regular character since $\xi^w\chi^w=\xi\chi$ and $\delta_B$ is regular. By Proposition \ref{jacdim} we thus obtain that $\xi\chi\delta_B\simeq\chi\delta_B$. Since these are characters though this means $\xi\chi\delta_B=\chi\delta_B \implies \xi\equiv 1$. From Lemma 2.1 d) of \cite{LInd} this means $St^{\chi}$ is irreducible as a representation of $G'$ and thus of $SL_n(F)$.
\end{proof}

\begin{theorem}
    The restriction of the Whittaker function described in Theorem \ref{StWhit} to $SL_n(F)$ (i.e. for $d\in T: det(d)=1$) is the unique up to scalar multiplication Iwahori spherical Whittaker function of $SL_n(F)$ corresponding to $(St^{\chi})^J$.
\end{theorem}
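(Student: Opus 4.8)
The plan is to identify the restricted function with the unique Iwahori fixed vector in the Whittaker model of $St^{\chi}|_{G'}$, where $G'=SL_n(F)\cdot\mathcal{Z}$, and then pass to $SL_n(F)$. First I would record the elementary observation that $N\subseteq SL_n(F)\subseteq G'$, so that $N\cap G'=N$ and $\psi$ is already a character of the unipotent radical of $G'$. Consequently any $GL_n(F)$-Whittaker functional for $St^{\chi}$ is simultaneously a $G'$-Whittaker functional for $St^{\chi}|_{G'}$, and the restriction to $G'$ of the function $W$ of Theorem \ref{StWhit} is again a Whittaker function, now for the restricted representation. Since $W(1)=1$, this restriction does not vanish identically.

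Next I would show that the Iwahori fixed space does not grow under restriction. The Iwahori subgroup of $G'$ is $J'=J\cap G'$, and because $N_{\mathcal{O}}$ and $N^-_{\mathfrak{p}}$ already lie in $SL_n(F)$, the Iwahori factorization of $J$ restricts to $J'=N_{\mathcal{O}}(T^0\cap G')N^-_{\mathfrak{p}}$; thus $J'$ differs from $J$ only in its torus part. The Borel--Casselman isomorphism of Proposition \ref{jacq}, which holds verbatim for the reductive group $G'$, then embeds $(St^{\chi})^{J'}$ into $(St^{\chi})_N^{T^0\cap G'}$. By Proposition \ref{jacdim} the Jacquet module $(St^{\chi})_N\cong\chi\delta_B$ is unramified, hence trivial on $T^0$ and a fortiori on $T^0\cap G'$, so $(St^{\chi})_N^{T^0\cap G'}=(St^{\chi})_N$ is one dimensional. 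Combined with the inclusion $(St^{\chi})^J\subseteq(St^{\chi})^{J'}$ and Corollary \ref{stdim}, this forces $(St^{\chi})^{J'}=(St^{\chi})^J$, spanned by the same Iwahori vector.

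The conclusion would then follow from uniqueness. By the preceding Lemma $St^{\chi}|_{G'}$ is irreducible, and it is generic with respect to $\psi$ because the functional restricted from $GL_n(F)$ is nonzero; by uniqueness of the $\psi$-Whittaker model for $SL_n(F)$ the space of $\psi$-Whittaker functionals on $St^{\chi}|_{G'}$ is one dimensional, so the Whittaker model is a genuine model of the representation. Its Iwahori fixed part is therefore the image of the one-dimensional space $(St^{\chi})^{J'}$, and is spanned by the nonzero, right $J'$-invariant function $W|_{G'}$. Passing from $G'$ to $SL_n(F)$ only fixes the central character and leaves the Whittaker model intact; on the locus $\det(d)=1$ one has $\sum_i k_i=0$, so the factors $\epsilon^{\sum k_i}$ and $(-1)^{(n-1)\sum k_i}$ in Theorem \ref{main} collapse and the function reduces to $\delta_B(d)(-q)^{-\ell(w)}$ on $w$-dominant weights, yielding the asserted Iwahori spherical Whittaker function for $SL_n(F)$.

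The main obstacle will be the appeal to uniqueness of Whittaker models for $SL_n(F)$. Unlike the $GL_n(F)$ case, the nondegenerate characters of $N$ break into several $T\cap SL_n(F)$-orbits, so a single irreducible representation may support Whittaker functionals for inequivalent characters and the total space of such functionals can be higher dimensional; the argument must therefore invoke uniqueness for the single fixed character $\psi$ rather than across all characters. A secondary point needing care is the verification that the Borel--Casselman projection stays injective for $G'$, so that $(St^{\chi})^{J'}$ truly collapses onto $(St^{\chi})^J$ and acquires no new vectors from the quotient $T^0/(T^0\cap G')$.
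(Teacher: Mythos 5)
Your proposal is correct, and it is in fact more complete than what the paper records: the paper states this theorem with no proof whatsoever, presenting it as an immediate consequence of the preceding lemma (irreducibility of $St^{\chi}|_{SL_n(F)}$) together with the unproved expectation that Iwahori-fixedness and Whittaker uniqueness survive restriction. Your argument supplies exactly the two suppressed ingredients. First, you show the Iwahori-fixed space does not grow under restriction by rerunning the Borel--Casselman argument of Proposition \ref{jacq} with the factorization $J\cap SL_n(F)=N_{\mathcal{O}}(T^0\cap SL_n(F))N^-_{\mathfrak{p}}$ and the unramifiedness of $(St^{\chi})_N\cong\chi\delta_B$ from Proposition \ref{jacdim}; this is the right mechanism, and your flagged worry about it is resolved by Borel's theorem, which is stated for semisimple groups and so covers $SL_n(F)$ directly. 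The one small repair: apply this to $SL_n(F)$ itself rather than to $G'=SL_n(F)\mathcal{Z}$, since $G'$ is merely an open finite-index-mod-center subgroup of $GL_n(F)$ (the locus $\det g\in(F^*)^n$) and not the group of $F$-points of a reductive group, so Proposition \ref{jacq} does not hold for it ``verbatim''; nothing else in your argument changes, and the paper's lemma already gives irreducibility over $SL_n(F)$ directly, so $G'$ can be bypassed entirely. Second, you correctly identify the genuinely delicate point that the paper passes over in silence: for $SL_n(F)$ the nondegenerate characters of $N$ split into several torus orbits, so one must invoke uniqueness of the $\psi$-Whittaker functional for the single fixed $\psi$ (Shalika's theorem for the split group $SL_n$), not a blanket multiplicity-one statement. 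Your closing computation, that on $\det d=1$ one has $\sum_i k_i=0$ so the value of Theorem \ref{main} collapses to $\delta_B(d)(-q)^{-\ell(w)}$ on $w$-dominant weights, also recovers the paper's concluding remark that all generalized Steinberg representations restrict to one and the same Iwahori spherical Whittaker function on $SL_n(F)$.
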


Notice that we get only one Whittaker function for all generalized Steinberg representations $(St^{\chi})^J$. This is because in $SL_n(F)$ all of them restrict to the same irreducible representation since $\chi\otimes St\simeq St^{\chi}$ by definition and $\chi$ is trivial on $G'$. 
In Hecke algebra terms this can be observed by restricting the action of $\mathcal{H}(G,J)$ to the action of $X_{s_{i}}$ only.

\end{document}